\documentclass{amsart} 
\usepackage{graphicx}
\usepackage{hyperref}
\usepackage{amsmath, amsthm, amssymb, amscd}

\usepackage{mathrsfs}
\usepackage{mathtools}
\usepackage{enumerate}
\usepackage{hyperref}
\usepackage{verbatim}
\usepackage{centernot}
\usepackage{subcaption}
\usepackage{cite}
\usepackage{color}
\usepackage{esint}
\usepackage{tikz-cd}
\usetikzlibrary{shapes.geometric}
\usepackage[shortlabels]{enumitem}
 
\usepackage{bm}
\usepackage{bbm, dsfont}
\definecolor{mycolor}{rgb}{0.0, 0.75, 1.0}

\makeatletter
\newcommand{\labitem}[2]{%
\def\@itemlabel{\textbf{#1}}
\item
\def\@currentlabel{#1}\label{#2}}
\makeatother

\title[Dirichlet forms with prescribed Hölder regularity]{One-dimensional Dirichlet forms with prescribed Hölder regularity}
\author{Riku Anttila}
\address[Riku Anttila]{Department of Mathematics and Statistics, University of Jyväskylä, P.O. Box 35, FI-40014 Jyväskylä, Finland}
\email{riku.t.anttila@jyu.fi}

\author{Roope Anttila}
\address[Roope Anttila]{University of St Andrews, Mathematical Institute, St Andrews KY16 9SS, Scotland
}
\email{ra216@st-andrews.ac.uk}

\subjclass[2020]{31C25, 46E36 (primary); 28A80 (secondary)}
\keywords{Dirichlet form, Hölder continuity, generalized $\alpha$-Frostman, upper Hausdorff dimension, heat kernel estimates} 

\date{\today}
\thanks{Riku Anttila was supported by the Finnish Ministry of Education and Culture's Pilot for Doctoral Programmes (Pilot project Mathematics of Sensing, Imaging and Modelling) and Roope Anttila by EPSRC, grant no. EP/Z533440/1.}

\usepackage[shortlabels]{enumitem}
 
\newtheorem{theorem}{Theorem}[section]
\newtheorem{lemma}[theorem]{Lemma}
\newtheorem{proposition}[theorem]{Proposition}
\newtheorem{corollary}[theorem]{Corollary}
\newtheorem{example}[theorem]{Example}
\newtheorem*{theorem*}{Theorem}

\numberwithin{equation}{section}

\theoremstyle{definition}
\newtheorem{definition}[theorem]{Definition}

\theoremstyle{remark}
\newtheorem{remark}[equation]{Remark}
    \newcommand*{\N}{\mathbb{N}}
    \newcommand*{\R}{\mathbb{R}}
        \DeclareMathOperator{\supp}{supp}
        \DeclareMathOperator{\dimh}{dim_H}
        \DeclareMathOperator{\udimh}{\overline{dim}_H}
        \DeclareMathOperator{\ldimloc}{\underline{dim}_{loc}}

        \DeclarePairedDelimiter\abs{\lvert}{\rvert}
        \DeclarePairedDelimiter\norm{\lVert}{\rVert}
        \DeclareMathOperator*{\esssup}{ess\,sup}
    \usepackage{systeme}
    \makeatletter
    \let\c@equation\c@figure
    \makeatother
    
\begin{document}

\begin{abstract}
    We study a class of strongly local, regular Dirichlet forms on the standard unit interval. The aim of our work is to record some Hölder regularity properties that have not been noted in the prior literature. In particular, as our main result, we show that for every $\delta \in (0,1]$ there exists a metric measure space $(X,d,\mu)$ equipped with a strongly local, regular Dirichlet form $(\mathcal{E},\mathcal{F})$ on $L^2(\mu)$ with the property that $\delta$ is the supremum of $\alpha \in (0,1]$ for which the domain $\mathcal{F}$ of the Dirichlet form contains a non-constant $\alpha$-Hölder continuous function.
    To the best of our knowledge, such examples were previously known only for $\delta = 1$.
    In our construction, the value $ \delta$ is characterized by the upper Hausdorff dimension of a certain Radon measure that is used to define $(\mathcal{E},\mathcal{F})$.
\end{abstract}

\maketitle
\section{Introduction}
The general theory of \emph{Dirichlet forms} provides a far reaching generalization of the quadratic Dirichlet energy
\[
    f \mapsto \int_{\R^n} \norm{\nabla f}^2 \, \textup{d}x,
\]
see \cite{FOT,ChenFukushima} for the standard textbooks of Dirichlet forms, and Section \ref{sec:DF} for the definitions relevant to this article. Dirichlet forms provide a unified language to develop analysis of stochastic processes, Sobolev-type function spaces and regularity theory of partial differential equations, on metric spaces without any smooth structure.
For instance, there is a whole branch of analysis called \emph{analysis on fractals} which is largely dedicated to studying Dirichlet forms on self-similar sets, such as the Sierpi\'nski gasket, the Sierpi\'nski carpet and the Menger sponge \cite{AOF,DOF,BarlowPerkins88,BarlowBass89,BarlowBass99}.

Moreover, it is by now well-known that Dirichlet forms are related to another theory of analysis on metric spaces based on upper gradients, see \cite{KoskelaZhou,DF-PI,Gigli15} and also \cite[Section 14.3]{HKST}. 
On the other hand, we note that neither of these theories cover the other. For instance, upper gradients do not necessarily satisfy a parallelogram law, see \cite{Gigli15}.
See also \cite[Theorem 2.6]{Barlow-CRM13} for a proof showing that the theory of upper gradients degenerates on the Sierpi\'nski carpet. Furthermore, see \cite{BarlowBass89,KusuokaZhou92} for constructions of a strongly local, regular Dirichlet form on the Sierpi\'nski carpet and \cite{BarlowBass92,SC-uniqueness10,Hino05,Hino13} for some of its intriguing properties.

\subsection{Main result}\label{sec:MainResult}
The aim of this article is to improve our understanding on \emph{pointwise behavior} of functions in the domains of Dirichlet forms.
Specifically, we are interested in Hölder regularity, which often appears in the related literature on heat kernel estimates \cite{GHL15,GrigoryanTelcs}.
Let us introduce the necessary terminology required to state our results.

Given $\alpha \in (0,1]$ and a metric space $(X,d)$, a function $f : X \to \R$ is \emph{$\alpha$-Hölder continuous} (or $\alpha$-Hölder for short) if there is $K > 0$ such that
\[
    \abs{f(x)-f(y)} \leq Kd(x,y)^\alpha \quad \text{for all $x,y \in X$.}
\]
As usual, if $\alpha=1$, then $f$ is called a \emph{Lipschitz function}. Note that, assuming $(X,d)$ is geodesic, the above condition is superfluous for $\alpha \in (1,\infty)$; a differentiation shows that it leads to constant functions. For this reason, we only consider $\alpha \in (0,1]$.

Now, consider a metric measure space $(X,d,\mu)$ equipped with a strongly local, regular Dirichlet form $(\mathcal{E},\mathcal{F})$ on $L^2(\mu)$, see Section \ref{sec:DF} for the definitions. Throughout the work, we call the value
\[
  \delta_* := \sup \{ \alpha \in (0,1] \colon \text{there exists a non-constant $\alpha$-Hölder continuous $f \in \mathcal{F}$}  \}
\]
the \emph{critical Hölder exponent} of $(X,d,\mu,\mathcal{E},\mathcal{F})$.
Here we understand $\delta_* = 0$ if $\mathcal{F}$ contains no non-constant Hölder continuous functions. Note that $\alpha$-Hölder continuity implies local $\beta$-Hölder continuity for $\beta \in (0,\alpha]$ and in this sense, the precise value of $\delta_*$ captures the highest possible regularity carried by functions in the domain $\mathcal{F}$.
A classic illustrating example is the Rademacher's theorem which states that Lipschitz functions with $X = \R^n$ are differentiable Lebesgue almost everywhere; see the seminal work of Cheeger for an extension of Rademacher's theorem to a certain class of metric spaces \cite{cheeger}.

Our objective is to study the properties of the critical Hölder exponent $\delta_*$. To the best of our knowledge, there are no prior works that consider $\delta_*$ in the Dirichlet form literature, but some related questions have attracted a notable attention; see the discussion below.
We are interested, for instance, in the following questions.
\begin{enumerate}
    \item[(\textbf{Q1})] Which values can $\delta_*$ take?
    \item[(\textbf{Q2})] Is $\delta_*$ a strict supremum or a maximum?
    \item[(\textbf{Q3})] If $\alpha \in (0,\delta_*)$, are $\alpha$-Hölder functions dense in $\mathcal{F}$?
\end{enumerate}

While it is clear that the classical Dirichlet energy on $\R^n$ satisfies $\delta_*=1$, we are not aware of any $(X,d,\mu,\mathcal{E},\mathcal{F})$ for which the precise value of $\delta_*$ is known for $\delta_* \neq 1$. In other words, (\textbf{Q1}) seems to be wide open. Our main result is an answer to (\textbf{Q1}).

\begin{theorem}\label{thm-1}
    For every $\delta \in [0,1]$, there is a metric measure space $(X,d,\mu)$ and a strongly local, regular Dirichlet form $(\mathcal{E},\mathcal{F})$ on $L^2(\mu)$ so that the critical Hölder exponent of $(X,d,\mu,\mathcal{E},\mathcal{F})$ is equal to $\delta$.
\end{theorem}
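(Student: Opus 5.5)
We take $X=[0,1]$ with the Euclidean metric and Lebesgue measure $\mu$. Given a finite Radon measure $\nu$ on $[0,1]$ with full support and no atoms, we consider the form
\[
\mathcal{E}(f,f)=\int_0^1 \abs{f'}^2\,\textup{d}\nu,
\]
where the domain $\mathcal{F}$ consists of functions $f$ that are absolutely continuous with respect to $\nu$. Concretely, $f(x)=f(0)+\int_0^x g\,\textup{d}\nu$ with $g\in L^2(\nu)$. An equivalent description is obtained by reparametrizing through $F(x)=\nu([0,x])$, so that $f=h\circ F$ with $h$ in the classical $H^1$. This form is a time-change or "speed/scale" type form of a one-dimensional diffusion. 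One should double-check that such $\mathcal{E}$ is indeed a closed, strongly local, regular Dirichlet form. The key facts are:
\begin{itemize}
\item Closedness follows from the isometry $h\mapsto h\circ F$.
\item The Markov property holds because normal contractions commute with composition.
\item Strong locality follows since the energy density lives on the support of $f'$.
\item Regularity holds because $C^1$ functions of $F$ are continuous and dense.
\end{itemize}
The whole point is then that the Hölder regularity of elements of $\mathcal{F}$ is governed by the behaviour of $\nu$ on small intervals.

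\textbf{Step 1 (upper bound).} Let $f\in\mathcal{F}$ and apply Cauchy--Schwarz:
\[
\abs{f(x)-f(y)}\le \Big(\int_{[x,y]}\abs{g}^2\,\textup{d}\nu\Big)^{1/2}\nu([x,y])^{1/2}.
\]
Now suppose $f$ is $\alpha$-Hölder and non-constant. Then on a set $E$ where $g\neq 0$, with $\nu(E)>0$, the Lebesgue-differentiation-type inequality $\abs{f(x)-f(y)}\approx \abs{g(x)}\,\nu([x,y])$ holds $\nu$-a.e. in $E$. Hence $\nu(B(x,r))\lesssim r^\alpha$ along small $r$ at $\nu$-a.e. point of $E$, which gives $\ldimloc\nu(x)\ge\alpha$ on $E$. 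Thus $\alpha\le$ the essential supremum of the lower local dimension of $\nu$, which is the upper Hausdorff dimension $\udimh\nu$. Hence $\delta_*\le\udimh\nu$.

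\textbf{Step 2 (lower bound).} Fix $\alpha<\udimh\nu$. Choose a compact set $K$ with $\nu(K)>0$ on which $\nu(B(x,r))\le Cr^{\alpha'}$ for all $r$, where $\alpha<\alpha'$. This is obtained from the lower local dimension via Egorov-type arguments, and is the "generalized Frostman" step. Set $g=\mathbf 1_K$ and $f(x)=\nu([0,x]\cap K)$. Then $f$ is non-constant and lies in $\mathcal{F}$. It is also Hölder, since
\[
\abs{f(x)-f(y)}=\nu([x,y]\cap K).
\]
This quantity is $\le C\abs{x-y}^{\alpha'}$ provided $[x,y]$ meets $K$, by taking a ball centred at a point of $K$ of radius $2\abs{x-y}$; otherwise it vanishes. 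Hence $\delta_*\ge\udimh\nu$.

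\textbf{Step 3 (realizing every $\delta$).} For $\delta\in(0,1]$, take $\nu$ fully supported, non-atomic, with $\udimh\nu=\delta$. One option is Lebesgue measure for $\delta=1$. Otherwise take a sum $\sum 2^{-n}\nu_n$ of translated and scaled copies of a $\delta$-dimensional self-similar Cantor measure placed densely, for instance on dyadic intervals, which keeps the local dimension equal to $\delta$ almost everywhere. For $\delta=0$, take a full-support non-atomic measure with lower local dimension $0$ $\nu$-a.e. Then every $\alpha>0$ fails by Step 1, so $\mathcal{F}$ has no non-constant Hölder functions. Such a measure can be built as a dense sum of Cantor measures with dimensions tending to $0$. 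Indeed, each Cantor measure of dimension $s_n$ is then a set with $\ldimloc=s_n$, which would give $\delta_*=\sup s_n$; so the construction must be adapted, for instance by using a single non-atomic measure with $\ldimloc=0$ a.e. from a Cantor construction with super-geometric contraction ratios.

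\textbf{Main obstacle.} I expect the main obstacle to be the rigorous version of Step 1. This means turning Hölder continuity of $f=\int g\,\textup{d}\nu$ into a local dimension bound. It requires a differentiation theorem for $\nu$ on $\mathbb{R}$, namely the fact that $\nu([x,x+r])^{-1}\int_{[x,x+r]}g\,\textup{d}\nu\to g(x)$ for $\nu$-a.e. $x$, valid for any Radon measure on the line by Besicovitch. Once this is in hand, one also has to treat carefully the correspondence between the lower local dimension along a subsequence of radii and $\udimh$.
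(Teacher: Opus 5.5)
Your proposal follows the paper's proof. Your form is exactly the pull-back $\mathcal{E}(f)=\int_0^1\abs{\nabla(f\circ\mathscr{U}^{-1})}^2\,\textup{d}x=\int_0^1\abs{\textup{D}f}^2\,\textup{d}\Lambda$ with $\Lambda=\nu$ and $\mathscr{U}=F$. Here your $f'$ has to mean the $\nu$-density $g=\textup{D}f$, not a pointwise derivative. Your Steps 1 and 2 are the two directions of Theorem~\ref{thm:general-p}, with Lemma~\ref{lemma:upperHausdorff} folded in.

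Two small corrections to the framework:
\begin{itemize}
\item The map $h\mapsto h\circ F$ preserves the energy but not the $L^2(\mu)$-norm. Closedness therefore comes from the norm comparability of Lemma~\ref{lemma:comparability}, not from an isometry.
\item Your ``main obstacle'' disappears if you use the symmetric increment $f(x+r)-f(x-r)=\int_{B(x,r)}g\,\textup{d}\nu$, which is valid since $\nu$ has no atoms. The centred differentiation theorem for $\nu$ then applies directly, as in the paper.
\end{itemize}

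For $\delta\in(0,1]$ your examples differ from the paper's dyadic Bernoulli measures, but they are valid and more elementary. You use Lebesgue measure for $\delta=1$, and dense sums of $\delta$-dimensional Cantor measures for $\delta\in(0,1)$, which is essentially Example~\ref{prop:is-gen-delta-frostman}. With your choices $\delta_*$ is attained, whereas the paper's choices make it a strict supremum. This difference is irrelevant for Theorem~\ref{thm-1}.

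The one step that does not work as written is $\delta=0$. A single measure from a Cantor construction with super-geometric ratios lives on a nowhere dense set, so it is not fully supported. Then $F$ is constant on the gaps, and every $f\in\mathcal{F}$ is constant there too. Consequently $\mathcal{F}$ is dense neither in $L^2(\textup{d}x)$ nor in $(\mathcal{C},\norm{\,\cdot\,}_\infty)$, so you do not even get a Dirichlet form on $L^2(\textup{d}x)$, let alone a regular one.

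The fix is the device you already used for $\delta\in(0,1)$. Let $\nu=\sum_n 2^{-n}\nu_n$, where the $\nu_n$ are rescaled copies of that zero-dimensional measure placed in every dyadic interval. Now all summands have dimension $0$; the positive dimensions $s_n$ were the only defect of your first attempt. This $\nu$ is non-atomic and fully supported. Since $\nu\geq 2^{-n}\nu_n$, you get $\ldimloc(\nu,x)\leq\ldimloc(\nu_n,x)=0$ for $\nu_n$-a.e.\ $x$. As $\nu$-a.e.\ point is of this type for some $n$, it follows that $\udimh\nu=0$.

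Alternatively, use the paper's inhomogeneous dyadic Bernoulli measure (Example~\ref{prop:0-dim-full-support}), which is fully supported by construction.
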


This article also provides insight to (\textbf{Q2}) and (\textbf{Q3}), as well as discusses some other properties of $\delta_*$ through examples, see the end of Section \ref{sec:FractalGeom}. The remainder of this section overviews the critical Hölder exponent $\delta_*$ within the prior literature.

Even though the precise value of $\delta_*$ is unkown in most cases, there are many examples where it is known that $\delta_*>0$, or in other words, $\mathcal{F}$ contains non-constant Hölder continuous functions. This holds whenever $(X,d,\mu,\mathcal{E},\mathcal{F})$ satisfies \emph{sub-Gaussian heat kernel estimates}, see the introduction of \cite{KM-singularity20} for a nice overview on such examples in analysis on fractals.

In \cite{AEBSLaaksoSobolev2025}, it was verified that the domain of a self-similar Dirichlet form on the Laakso diamond space does not contain any non-constant Lipschitz functions. Note that this is not strong enough to preclude $\delta_* = 1$.

Lastly, we discuss the case of the self-similar Dirichlet form on the Sierpi\'nski gasket (with the Euclidean metric) which has received attention for some related questions. For this Dirichlet form, every $f \in \mathcal{F}$ is $\alpha$-Hölder continuous for $\alpha = \log (5/3)/\log 4$.
This can be understood as a counterpart of the Morrey inequality; see \cite[Corollary 2.1.17]{AOF} and \cite[Lemma 8.17]{DOF}.
In \cite[Section 6.3]{Murugan-unifdomain}, Murugan conjectured that $\mathcal{F}$ does not contain any non-constant Lipschitz functions; note that even if Murugan's prediction was correct this would not give a non-trivial upper bound for $\delta_*$. And lastly, there is an old conjecture of Barlow and Perkins from 1988 \cite[Section 9]{BarlowPerkins88} that the domain of the associated Laplace operator, let us denote it by $\mathcal{D}(\mathcal{A})$, does not contain any non-constant $\alpha$-Hölder continuous functions for $\alpha \in (\log(5/3)/\log 2,1]$. Moreover, they prove in \cite[Theorem 5.22]{BarlowPerkins88} that every $f \in \mathcal{D}(\mathcal{A})$ is $\alpha$-Hölder continuous for $\alpha = \log(5/3)/\log 2$. Hence, the conjecture of Barlow and Perkins concern a similar critical Hölder exponent for $\mathcal{D}(\mathcal{A})$, and claims it to be equal to $\log(5/3)/\log 2$.

The conjecture of Barlow and Perkins was positively resolved very recently by Qiu and Wang \cite{QiuWang26}. The question of Murugan, however, still remains open.

\subsection{Construction of Dirichlet forms}\label{intro-2}

In order to prove Theorem \ref{thm-1}, we consider a class of Dirichlet forms defined on the standard unit interval $[0,1]$. The idea is very simple, we just pull-back the classical Dirichlet energy on the real line along carefully chosen homeomorphsims.
More precisely, we fix a non-atomic Radon probability measure $\Lambda$ on $[0,1]$ with full topological support, denote its cumulative distribution function by $\mathscr{U}(x) := \Lambda([0,x])$, and define the Dirichlet form $(\mathcal{E},\mathcal{F})$ according to the formula
\[
    \mathcal{E}(f,g) := \int_0^1 \nabla(f \circ \mathscr{U}^{-1}) \cdot \nabla(g \circ \mathscr{U}^{-1})\, \textup{d}x. 
\]
Here $\nabla h$ denotes the usual weak derivative of an absolutely continuous function $h : [0,1] \to \R$.
The domain $\mathcal{F}$ consists of continuous functions $f : [0,1] \to \R$ so that $f \circ \mathscr{U}^{-1}$ is absolutely continuous and $\nabla (f \circ \mathscr{U}^{-1}) \in L^2(\textup{d}x)$ where $\textup{d}x$ denotes the Lebesgue measure restricted to $[0,1]$. Lastly, we also fix a reference measure $\mu$, which is any non-atomic Radon probability measure on $[0,1]$ with full topological support. The choice of $\mu$ will not be relevant in the Hölder continuity results but it is customary to make such a choice in the general theory of Dirichlet forms\footnote{A reader dedicated to not fixing $\mu$ could alternatively consider resistance forms \cite{AOF}.}.
In the following section, we sketch the main ideas to finding the critical Hölder exponent $\delta_*$ of $([0,1],\abs{\,\cdot\,},\mu,\mathcal{E},\mathcal{F})$.

Before getting into details about $\delta_*$ in this setting, let us briefly comment on the above construction. 
Firstly, a different formulation producing the above Dirichlet forms is introduced in 
\cite[Section 2.2.3]{ChenFukushima}; see also references in \cite[Section 3.5]{ChenFukushima} for some historically important works on the associated one-dimensional diffusion processes.
To conclude that these are indeed the same, one only needs to apply classical theory of functions of bounded variation.
The details are left to an interested reader, and the required tools to this end can be found, for instance, from Proposition \ref{prop:F-properties} and \cite[Page 112]{HKST}.
Secondly, we could have alternatively changed the metric on $[0,1]$ and kept $(\mathcal{E},\mathcal{F})$ as the standard Dirichlet energy to achieve the same effect.
The present formulation is chosen because it seems to lead to the cleanest exposition of our results. And lastly, we remark that an alternative definition $(f,g) \mapsto \int_0^1 \nabla f \cdot \nabla g \, \textup{d}\Lambda$ never leads to a fruitful theory when $\Lambda$ is not an absolutely continuous measure, see \cite[Theorem 3.1.6]{FOT} and \cite{MarinoLucicPasqualetto}.

\subsection{Fractal geometry}\label{sec:FractalGeom}
Given $([0,1],\abs{\,\cdot\,},d,\mu,\mathcal{E},\mathcal{F})$ as above, its critical Hölder exponent $\delta_*$ turns out to be related to the infinitesimal scaling properties of $\Lambda$.
Recall that a Radon measure $\nu$ on $[0,1]$ is \emph{$\alpha$-Frostman} if there is $C>0$ such that
\begin{equation*}
    \nu(B(x,r)) \leq Cr^\alpha \quad \text{for all $x\in[0,1]$ and $r>0$.}
\end{equation*}
Our work identifies the following weaker variant of the Frostman condition to be crucial. Due to the lack of better terminology, we call this condition the generalized $\alpha$-Frostman condition.

\begin{definition}\label{def:Frostman}
    For $\alpha \in (0,1]$, we call a Radon measure $\nu$ on $[0,1]$  \emph{generalized $\alpha$-Frostman} if there is a Borel set $E \subset [0,1]$ with $\nu(E) > 0$ satisfying
    \begin{equation}\label{eq:Frostman}
        \limsup_{r \downarrow 0} \frac{\nu(B(x,r))}{r^\alpha}
         < \infty \quad \text{for all $x \in E$.}
    \end{equation}
\end{definition}
    Stated in other words, $\nu$ is generalized $\alpha$-Frostman if and only if it contains a non-negligible piece where it satisfies the classical $\alpha$-Frostman condition, infinitesimally\footnote{Note that, by taking a subset of $E$ if necessary, the infinitesimal condition \eqref{eq:Frostman} can be upgraded to the classical Frostman condition on a set of positive measure.}.
    This notion has direct and well known connections to dimension theory of measures which is studied in \emph{fractal geometry}.
    Let us review the definitions relevant to the article, and refer to \cite{Falconer,Mattila} for a more comprehensive introduction to the subject. The \emph{lower pointwise dimension} of a Radon measure $\nu$ at $x \in [0,1]$ is
    \[
    \ldimloc(\nu,x):=\liminf_{r\downarrow0} \frac{\log \nu(B(x,r))}{\log r}.
    \]
    The \emph{upper Hausdorff dimension} of $\nu$ is then defined as the $\nu$-essential supremum
    \[
        \udimh\nu\coloneqq \esssup_{x\sim \nu}\ldimloc(\nu,x).
    \]
    The following lemma, which is standard in fractal geometry, records the relationship between the generalized $\alpha$-Frostman condition and the upper Hausdorff dimension.

\begin{lemma}\label{lemma:upperHausdorff}
    Let $\nu$ be a Radon measure on $[0,1]$.
    If $\alpha \in [0,\udimh\nu)$ then $\nu$ is generalized $\alpha$-Frostman. On the other hand, if $\alpha \in (\udimh\nu,1]$, then $\nu$ is not generalized $\alpha$-Frostman.
\end{lemma}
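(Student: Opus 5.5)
Both directions follow by comparing the two conditions pointwise and then using the definition of the essential supremum. The basic observation is the following. If $\ldimloc(\nu,x) > \alpha$, choose $\beta$ with $\alpha<\beta<\ldimloc(\nu,x)$. By the definition of the liminf there is $r_0>0$ such that $\log\nu(B(x,r))/\log r > \beta$ for all $r<r_0$. Since $\log r<0$ for small $r$, this gives $\nu(B(x,r)) \le r^\beta$. Hence $\nu(B(x,r))/r^\alpha \le r^{\beta-\alpha}\to 0$, and \eqref{eq:Frostman} holds at $x$. Conversely, suppose \eqref{eq:Frostman} holds at $x$, so $\nu(B(x,r))\le Cr^\alpha$ for all small $r$. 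Taking logarithms and dividing by $\log r<0$ gives $\log\nu(B(x,r))/\log r \ge \alpha + \log C/\log r$. The last term tends to $0$, so $\ldimloc(\nu,x)\ge\alpha$. In summary,
\[
\{x : \ldimloc(\nu,x)>\alpha\} \subset \{x : \text{\eqref{eq:Frostman} holds at } x\} \subset \{x : \ldimloc(\nu,x)\ge\alpha\}.
\]
Points with $\nu(B(x,r))=0$ for some $r>0$ need a word: there the local dimension is $+\infty$ by the convention $\log 0=-\infty$, and \eqref{eq:Frostman} trivially holds. So these points are consistent with both inclusions.

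\emph{First claim.} Let $\alpha<\udimh\nu$. By the definition of the essential supremum, the set $E=\{x:\ldimloc(\nu,x)>\alpha\}$ has positive $\nu$-measure. This requires $E$ to be measurable, or at least to have positive outer measure. To be safe, I would check Borel measurability of $x\mapsto \ldimloc(\nu,x)$. The map $x\mapsto\nu(B(x,r))$ is lower semicontinuous for open balls, hence Borel. The liminf can be restricted to rational $r$ up to a harmless comparison: for $r_{n+1}\le r\le r_n$ with rational $r_n\downarrow 0$ and $\log r_{n+1}/\log r_n\to1$, monotonicity of $r\mapsto\nu(B(x,r))$ shows the liminf over the $r_n$ equals the full liminf. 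The same monotonicity argument makes the set $E' = \{x : \text{\eqref{eq:Frostman} holds at } x\}$ Borel. By the first inclusion, $E\subset E'$, so $\nu(E')>0$, and $\nu$ is generalized $\alpha$-Frostman. The case $\alpha=0$ fits into the same argument; it is excluded from Definition \ref{def:Frostman} only formally.

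\emph{Second claim.} Let $\alpha>\udimh\nu$. Then $\ldimloc(\nu,x)\le\udimh\nu<\alpha$ for $\nu$-a.e.\ $x$. Suppose $E$ were a set of positive measure on which \eqref{eq:Frostman} holds. By the second inclusion, $\ldimloc(\nu,x)\ge\alpha$ on $E$, which contradicts the previous sentence. So $\nu$ is not generalized $\alpha$-Frostman.

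The only mildly delicate step is the measurability and discretization in the first claim, handled by the monotonicity comparison above. Everything else is a direct unwinding of the definitions.
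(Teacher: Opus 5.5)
Your proposal is correct and follows the same route as the paper. You compare $\ldimloc(\nu,x)$ with \eqref{eq:Frostman} pointwise ($\ldimloc(\nu,x)>\alpha$ gives the Frostman bound at $x$, and the Frostman bound forces $\ldimloc(\nu,x)\ge\alpha$), then apply the definition of the essential supremum in each direction; your check that $x\mapsto\ldimloc(\nu,x)$ is Borel, via lower semicontinuity of $x\mapsto\nu(B(x,r))$ and discretizing the radii, is a detail the paper leaves implicit.
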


\begin{proof}
    This essentially follows by carefully comparing the relevant definitions to each other. Nevertheless, let us sketch the details since we do not expect the reader to have a background in fractal geometry.

    Assume first that $\alpha \in [0,\udimh\nu)$. It holds by the definition of $\udimh\nu$ that
    \[
        E := \{ x \in [0,1] \colon \ldimloc(\nu,x) > \alpha \}
    \]
    has $\nu(E)>0$. Then, by the definition of $\ldimloc(\nu,x)$, for every $x \in E$ there is $R_x \in (0,1)$ such that $\nu(B(x,r)) \leq r^\alpha$ for all $x \in E$ and $r \in (0,R_x)$. This proves that $\nu$ is generalized $\alpha$-Frostman.

    Assume next that $\alpha \in (\udimh\nu,1]$. Then there is $\varepsilon>0$ such that
    \[
        Z := \{ x \in [0,1] : \ldimloc(\nu,x) \leq \alpha - \varepsilon \}
    \]
    has $\nu([0,1] \setminus Z) = 0$. It is not difficult to see that \eqref{eq:Frostman} fails at every $x \in Z$. Hence, $\nu$ cannot be generalized $\alpha$-Frostman.
\end{proof}

These fractal geometric concepts give a complete characterization of the Hölder continuity properties of $(\mathcal{E},\mathcal{F})$.

\begin{theorem}\label{thm-2}
    Let $\Lambda$ and $(\mathcal{E},\mathcal{F})$ be as above, and let $\alpha \in (0,1]$. Then, there exists a non-constant $\alpha$-Hölder continuous function $f \in \mathcal{F}$ if and only if $\Lambda$ is generalized $\alpha$-Frostman.
\end{theorem}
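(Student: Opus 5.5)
The proof uses the change of variables $g := f \circ \mathscr{U}^{-1}$. Since $\Lambda$ is non-atomic with full support, $\mathscr{U} \colon [0,1] \to [0,1]$ is a strictly increasing homeomorphism. Moreover, the push-forward $\mathscr{U}_\#\Lambda$ is Lebesgue measure on $[0,1]$, so $\Lambda(I) = \abs{\mathscr{U}(I)}$ for every interval $I$. By definition $f \in \mathcal{F}$ exactly when $f = g \circ \mathscr{U}$ with $g$ absolutely continuous and $h := \nabla g \in L^2(\textup{d}x)$. For such $f$ and $y \le x$,
\[
    f(x) - f(y) = \int_{\mathscr{U}(y)}^{\mathscr{U}(x)} h \,\textup{d}t .
\]
Both directions reduce to comparing this identity with the Frostman-type quantity $\Lambda(B(x,r))$.

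\emph{Frostman $\Rightarrow$ Hölder.} Let $E$ be as in Definition \ref{def:Frostman}. By the footnote, pass to a Borel $E' \subset E$ with $\Lambda(E') > 0$ and $\Lambda(B(z,r)) \le C r^\alpha$ for all $z \in E'$ and all $r > 0$. This is done by writing $E$ as a countable union of sets where the $\limsup$ bound holds uniformly for $r < 1/n$ with constant $n$, and using that $\Lambda$ is finite to handle $r \ge 1/n$. Define
\[
    f(x) := \Lambda(E' \cap [0,x]),
\]
that is, $g(t) = \abs{\mathscr{U}(E') \cap [0,t]}$ and $h = \mathbbm{1}_{\mathscr{U}(E')}$. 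Then $g$ is Lipschitz, so $f \in \mathcal{F}$, and $f$ is non-constant since $f(1) = \Lambda(E') > 0 = f(0)$.

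For the Hölder bound, take $y < x$. If $E' \cap [y,x] = \emptyset$, the increment $\abs{f(x)-f(y)} = \Lambda(E' \cap [y,x])$ vanishes. Otherwise pick $z \in E' \cap [y,x]$; then $[y,x] \subset B(z, 2\abs{x-y})$, so
\[
    \abs{f(x)-f(y)} \le C 2^\alpha \abs{x-y}^\alpha .
\]

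\emph{Hölder $\Rightarrow$ Frostman.} Let $f \in \mathcal{F}$ be non-constant and $\alpha$-Hölder with constant $K$. Then $h \not\equiv 0$, so the set $A$ of Lebesgue points $t$ of $h$ with $h(t) \neq 0$ has positive Lebesgue measure. Set $E := \mathscr{U}^{-1}(A)$; then $\Lambda(E) = \abs{A} > 0$.

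At a Lebesgue point $t$, the differentiation theorem also holds along non-centred intervals $I \ni t$ with $\abs{I} \to 0$. Indeed, $I$ lies in the centred interval of radius $\abs{I}$, so
\[
    \frac{1}{\abs{I}}\int_I \abs{h - h(t)} \le 2 \cdot (\text{centred average over radius } \abs{I}) \to 0 .
\]
Hence $\abs{\int_I h} \ge \tfrac12 \abs{h(t)}\,\abs{I}$ for all such $I$ of small length.

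Now fix $x \in E$, $t = \mathscr{U}(x)$, and let $I = \mathscr{U}(B(x,r) \cap [0,1])$. This interval contains $t$ and has length $\Lambda(B(x,r)) \to 0$ as $r \to 0$, by non-atomicity. Its endpoints are $\mathscr{U}(a)$ and $\mathscr{U}(b)$, where $a < b$ are the endpoints of $B(x,r) \cap [0,1]$, so $b - a \le 2r$. Using the identity above and Hölder continuity of $f$,
\[
    \tfrac12 \abs{h(t)}\, \Lambda(B(x,r)) \le \Bigl|\int_I h \,\textup{d}t\Bigr| = \abs{f(b) - f(a)} \le K (2r)^\alpha .
\]
Therefore $\limsup_{r \downarrow 0} \Lambda(B(x,r))/r^\alpha \le 2^{1+\alpha} K / \abs{h(t)} < \infty$ for every $x \in E$, so $\Lambda$ is generalized $\alpha$-Frostman.

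\emph{Expected obstacles.} The argument is short. The steps needing the most care are measure-theoretic:
\begin{itemize}
\item justifying $\mathscr{U}_\#\Lambda = \textup{d}x$ and that $\mathscr{U}(E')$ is Borel, using that $\mathscr{U}$ is a homeomorphism;
\item the non-centred Lebesgue differentiation step;
\item the uniformization of the Frostman constant on a subset of positive measure.
\end{itemize}
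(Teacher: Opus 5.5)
Your proof is correct. The backward direction matches the paper's argument: the same test function $f(x)=\int_0^x \mathds{1}_{E'}\,\textup{d}\Lambda$, and the same Hölder estimate via a point of the Frostman set near $[y,x]$. Your bookkeeping is slightly tidier. Choosing $z\in E'\cap[y,x]$ and making the Frostman bound hold for all $r>0$ removes the paper's restriction $|x-y|\le R/4$ and its use of the radius $4|x-y|$.

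The forward direction has the same skeleton as the paper's. You find a positive-measure set where the derivative is nonzero and is a limit of averages, then compare the averaged increment with $K(2r)^\alpha$. What differs is the differentiation step. The paper stays in the original coordinates and applies the differentiation theorem for the general Radon measure $\Lambda$ over centred balls \cite[Corollary 2.14]{Mattila} to $\textup{D}f$; that result rests on Besicovitch-type covering. You instead use the fact that the $\Lambda$-average of $\textup{D}f$ over $B(x,r)$ equals the Lebesgue average of $h=\nabla(f\circ\mathscr{U}^{-1})$ over the interval $\mathscr{U}(B(x,r))$. That interval contains $\mathscr{U}(x)$ but is generally not centred there, and you handle this with the trivial factor-$2$ comparison at Lebesgue points of $h$.

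Your route therefore needs only the classical Lebesgue differentiation theorem; in effect it proves the one-dimensional $\Lambda$-differentiation theorem by change of variables. The paper's version is shorter given the citation, and it stays within its intrinsic calculus in terms of $\textup{D}$ and $\Lambda$.
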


Note that Lemma \ref{lemma:upperHausdorff} and Theorem \ref{thm-2} immediately yield a precise value for the critical exponent $\delta_*$ in our setting.
\begin{corollary}\label{cor:delta_*}
    If $\Lambda$ and $(\mathcal{E},\mathcal{F})$ are as above, then
    $\delta_*=\udimh\Lambda$.
\end{corollary}
Theorem \ref{thm-1} now follows once we find for every $\delta \in [0,1]$ a non-atomic Radon probability measure $\nu$ with full topological support satisfying $\udimh\nu=\delta$. Constructions producing such measures are widely known in the fractal geometry literature but we nevertheless give full details in Examples \ref{prop:not-gen-delta-frostman} and \ref{prop:0-dim-full-support}.

Furthermore, we discuss the question (\textbf{Q2}) by constructing examples that realize cases where $\delta_*$ is a strict supremum, and also cases where it is a maximum, see Examples \ref{prop:not-gen-delta-frostman} and \ref{prop:is-gen-delta-frostman}. The question (\textbf{Q3}) and some other density properties are studied in Section \ref{sec:density}.

The remainder of the introduction outlines some extensions of our framework.

\subsection{Heat kernel estimates}
For the sake of future reference, we also record results about heat kernel estimates in Section \ref{sec:analytic}. In particular, we show that given any $\beta > 2$ our construction realizes an example $([0,1],\abs{\, \cdot\,},\mu,\mathcal{E},\mathcal{F})$ that satisfies sub-Gaussian heat kernel estimates with space-time scaling $\Psi(r) = r^\beta$, see Theorem \ref{thm:inverse} and Remark \ref{rem:HKE} for details. To our knowledge, the existence of such an example  on the real line was not written down in the prior literature, although some experts were already aware of this phenomenon.\footnote{The first author learned about sub-Gaussian estimates on the real line for the first time from Mathav Murugan during an informal meeting in 2025. 
This discussion happened several months before the work related to this article started.} We note that the existence of a $(X,d,\mu,\mathcal{E},\mathcal{F})$ satisfying sub-Gaussian estimates with $\Psi(r) = r^\beta$ for $\beta > 2$ is well-known; see \cite{MuruganLaakso,Barlow-which}.

\subsection{Extension to $p$}
In the recent years, there has been an increasing interest in $p$-versions of Dirichlet forms, see \cite{EID2026,ResistanceConjecture,KajinoShimizu,Sasaya,kigami} and references therein. Our methods do not rely on the linearity of Dirichlet forms, and therefore our main results are formulated for general $p \in [1,\infty)$ instead of restricting to the case $p=2$.

\subsection*{Structure of the article}
In Sections \ref{sec:2} and \ref{sec:3} we set up the preliminaries and the general framework. Section \ref{sec:4} contains the proof of Theorem \ref{thm-2}.
In Section \ref{sec:examples} we complete the proof of Theorem \ref{thm-1} by giving suitable examples of measures and applying Theorem \ref{thm-2}.

Finally, in Section \ref{sec:analytic} we identify some conditions under which our construction exhibits finer analytic properties. These are unrelated to the Hölder continuity results but are of independent interest. For $p=2$, we obtain heat kernel estimates.

\section*{Acknowledgments}
During an informal meeting of the first author, Sylvester Eriksson-Bique and Mathav Murugan in 2025, Murugan explained a construction of a $([0,1],\abs{\,\cdot\,},\mu,\mathcal{E},\mathcal{F})$ that satisfies sub-Gaussian estimates with $\Psi(r) = r^\beta$ for any $\beta > 2$. At the time, we were unaware of sub-Gaussian estimates on the real line.
Section \ref{sec:analytic} was largely inspired by this discussion. We are deeply grateful to Eriksson-Bique and Murugan for the fruitful dialogue back then. We also thank Ryosuke Shimizu for pointing out some references related to this article, and Tuomas Orponen for kindly providing the example in Remark \ref{rem:Fail}.

\section{Preliminaries}\label{sec:2}

\subsection{Notation}
Throughout the article, $\mathcal{C}$ denotes the space of continuous functions $[0,1] \to \R$, and $\norm{\,\cdot\,}_\infty$ stands for its \emph{uniform norm}.  
The \emph{open balls} of $[0,1]$ are denoted by
\[
    B(x,r) := \{y \in [0,1] : \abs{x-y} < r\}.
\]
The notation $\textup{d}x$ always refers to the 1-dimensional Lebesgue measure on $[0,1]$.

When $\nu$ is a non-negative Radon measure on $[0,1]$ and $p \in [1,\infty)$, we use the usual notation $L^p(\nu)$ for the Lebesgue spaces, and denote the norm by $\norm{\,\cdot\,}_{L^p(\nu)}$.
If $\nu$ also has full topological support, meaning $\nu$ is positive on non-empty open sets, we always identify $\mathcal{C} \subset L^p(\nu)$.
Given a Borel measurable $f : [0,1] \to [0,1]$, we denote the \emph{push-forward} of $\nu$ along $f$ by $f_*(\nu)$. Recall that
\begin{equation}\label{eq:Push-forward}
    \int_{A} g \, \textup{d}f_*(\nu) = \int_{f^{-1}(A)} (g \circ f) \, \textup{d}\nu
\end{equation}
for all Borel measurable $g : [0,1] \to [-\infty,\infty]$ and all Borel sets $A \subset [0,1]$. For notational convenience, we shall write the integrals over intervals
\[
    \int_a^b f \,\textup{d}\nu := \int_{[a,b]} f \,\textup{d}\nu
\]
where $0 \leq a \leq b \leq 1$ and $f : [0,1]\to[-\infty,\infty]$ is Borel measurable.

\subsection{Sobolev spaces}
Let us next fix the terminology related to first-order Sobolev spaces on the real line. The relevant background may be found for instance in  Section 4 of the book by Evans and Gariepy \cite{Evans-Gariepy}.
For the basics of absolutely continuous functions see Chapter 7 of Rudin's book \cite{Rudin-2}. 

For $p \in [1,\infty)$, the notation $W^{1,p}$ always refers to the \emph{first-order $(1,p)$-Sobolev space} where the domain is the open interval $(0,1)$.
The \emph{weak derivative} of $h \in W^{1,p}$ on $(0,1)$ is denoted by $\nabla h \in L^p(\textup{d}x)$.
The usual Sobolev norm is denoted by $\norm{h}_{W^{1,p}} := \norm{h}_{L^p(\textup{d}x)} + \norm{\nabla h}_{L^p(\textup{d}x)}$. Lastly, whenever we write $h \in W^{1,p}$, we always identify it with the absolutely continuous representative $h : [0,1] \to \R$. Recall that this representative satisfies the \emph{fundamental theorem of calculus},
\begin{equation}\label{eq:abs-cont}
    h(y) = h(0) + \int_0^y \nabla h\, \textup{d}x \quad \text{for all $y \in [0,1]$,}
\end{equation}
see for instance the 5th exercise problem in \cite[Chapter 5]{evans}.
In particular, we understand that $W^{1,p} \subset \mathcal{C}$.

\section{The energy form and its basic properties}\label{sec:3}
This section provides the details about the construction discussed in introduction. As already mentioned, we consider general $p \in [1,\infty)$, but also discuss the $p=2$ case separately.

\subsection{General framework}\label{sec:Framework}
Fix an exponent $p \in [1,\infty)$ and a pair of Radon measures $\mu$ and $\Lambda$ on $[0,1]$. Both $\mu$ and $\Lambda$ are assumed to be probability measures of full topological support that contain no atoms.
The cumulative distribution function of $\Lambda$ is denoted by $\mathscr{U} : [0,1] \to [0,1]$ where
\[
    \mathscr{U}(x) := \Lambda([0,x])
\]
for all $x \in [0,1]$.
Note that $\mathscr{U}$ is an increasing homeomorphism, and that $\mathscr{U}_*(\Lambda) = \textup{d}x$. The former holds by the hypotheses. To justify the latter, first note that
\begin{align*}
    \Lambda\left( \mathscr{U}^{-1}((a,b)) \right) & =  \Lambda\left(\mathscr{U}^{-1}(a),\mathscr{U}^{-1}(b)\right)\\
    & = \Lambda\left( 0,\mathscr{U}^{-1}(b)\right) - \Lambda\left( 0,\mathscr{U}^{-1}(a)\right) = b-a,
\end{align*}
where $0 \leq a < b \leq 1$. The equality $\mathscr{U}_*(\Lambda) = \textup{d}x$ is now a consequence of Caratheodory's extension theorem.

Given these parameters, we define the $p$-energy form as follows.

\begin{definition}\label{def:p-EF}
    The \emph{$p$-energy form} is defined as the pair $(\mathcal{E},\mathcal{F})$ given by the following.
    First, the \emph{domain} $\mathcal{F} \subset \mathcal{C}$ consist of $f \in \mathcal{C}$ such that $f \circ \mathscr{U}^{-1} \in W^{1,p}$.
    The \emph{$p$-energy} $\mathcal{E} : \mathcal{F} \to [0,\infty)$ is given by $\mathcal{E}(f) := \int_0^1 \abs{\nabla (f \circ \mathscr{U}^{-1})}^p \, \textup{d}x$. The domain $\mathcal{F}$ is equipped with the \emph{Sobolev norm} $\norm{f}_{\mathcal{F}} := \norm{f}_{L^p(\mu)} + \mathcal{E}(f)^{\frac{1}{p}}$.
\end{definition}

Throughout, $(\mathcal{E},\mathcal{F})$ always refers to the $p$-energy form in Definition \ref{def:p-EF} given by the parameters that are clear from the context.
This admits an intrinsic first-order calculus. We define the \emph{derivation} $\textrm{D} : \mathcal{F} \to L^p(\Lambda)$ by
\[
\textrm{D} f :=  (\nabla (f \circ \mathscr{U}^{-1})) \circ \mathscr{U} \in L^p(\Lambda).
\]
By $\mathscr{U}_*(\Lambda) = \textup{d}x$ and \eqref{eq:Push-forward}, it holds that $F = H$ $\textup{d}x$-almost everywhere if and only if $F \circ \mathscr{U} = H \circ \mathscr{U}$ $\Lambda$-almost everywhere.
This shows that $\textrm{D} f$ does not depend on the choice of the $\textup{d}x$-representative of $\nabla (f \circ \mathscr{U}^{-1}) \in L^p(\textup{d}x)$.
Hence, the derivation $\textrm{D}$ is well-defined.
It also holds that
$\norm{\textrm{D} f}_{L^p(\Lambda)}^p = \mathcal{E}(f) < \infty$ for all $f \in \mathcal{F}$.

The following proposition shows that the natural map $f \mapsto f \circ \mathscr{U}^{-1}$ is an equivalence between the normed spaces $\mathcal{F}$ and $W^{1,p}$.
Note that this map is not always an isometry because the $L^p$-norms $\norm{f}_{L^p(\mu)}$ and $\norm{f \circ \mathscr{U}^{-1}}_{L^p(\textup{d}x)}$ are not comparable in general.
The equivalence of the Sobolev norms is a consequence of \eqref{eq:abs-cont}.

\begin{lemma}\label{lemma:comparability}
    The map $\mathcal{F} \to W^{1,p}$ given by $f \mapsto f \circ \mathscr{U}^{-1}$ is a linear bijection such that there is $C \geq 1$ satisfying
    \[
       C^{-1} \norm{f \circ \mathscr{U}^{-1}}_{W^{1,p}} \leq \norm{f}_{\mathcal{F}} \leq C \norm{f \circ \mathscr{U}^{-1}}_{W^{1,p}}
    \]
    for all $f \in \mathcal{F}$.
\end{lemma}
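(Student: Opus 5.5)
The plan is to establish the algebraic part first and then derive the two norm inequalities from the fundamental theorem of calculus \eqref{eq:abs-cont}.

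\textbf{Linear bijection.} Linearity of $f \mapsto f\circ\mathscr{U}^{-1}$ is immediate, and the map lands in $W^{1,p}$ by Definition \ref{def:p-EF}. Since $\mathscr{U}$ is a homeomorphism, the map is injective. For surjectivity, given $h \in W^{1,p}$ (identified with its continuous representative) I set $f := h \circ \mathscr{U}$. Then $f \in \mathcal{C}$ and $f\circ\mathscr{U}^{-1} = h \in W^{1,p}$, so $f \in \mathcal{F}$.

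\textbf{Norm comparison.} Write $h = f\circ\mathscr{U}^{-1}$. By definition $\mathcal{E}(f)^{1/p} = \norm{\nabla h}_{L^p(\textup{d}x)}$, so only the $L^p$ parts need comparing. The key estimate is the oscillation bound from \eqref{eq:abs-cont} and Hölder's inequality on $[0,1]$:
\[
\abs{h(x)-h(y)} \le \norm{\nabla h}_{L^1(\textup{d}x)} \le \norm{\nabla h}_{L^p(\textup{d}x)} \quad \text{for all } x,y\in[0,1].
\]
For the upper bound, this gives $\norm{h}_\infty \le \norm{h}_{L^1(\textup{d}x)} + \norm{\nabla h}_{L^p(\textup{d}x)}$, by choosing $y$ with $\abs{h(y)} \le \norm{h}_{L^1(\textup{d}x)}$. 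Since $\mu$ is a probability measure,
\[
\norm{f}_{L^p(\mu)} \le \norm{f}_\infty = \norm{h}_\infty \le \norm{h}_{L^p(\textup{d}x)} + \norm{\nabla h}_{L^p(\textup{d}x)} .
\]
Hence $\norm{f}_{\mathcal{F}} \le 2\norm{h}_{W^{1,p}}$.

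For the lower bound, I apply the same trick with the roles of the measures swapped. Since $\mu$ is a probability measure, there is a point $y$ with $\abs{f(y)} \le \norm{f}_{L^1(\mu)} \le \norm{f}_{L^p(\mu)}$. Otherwise $\abs{f} > \norm{f}_{L^1(\mu)}$ everywhere, and integrating against $\mu$ gives a contradiction. Then for every $x$,
\[
\abs{f(x)} \le \abs{f(y)} + \abs{f(x)-f(y)} \le \norm{f}_{L^p(\mu)} + \mathcal{E}(f)^{1/p} = \norm{f}_{\mathcal{F}} ,
\]
using the oscillation bound transported through $\mathscr{U}$. Therefore $\norm{h}_{L^p(\textup{d}x)} \le \norm{h}_\infty = \norm{f}_\infty \le \norm{f}_{\mathcal{F}}$, and so $\norm{h}_{W^{1,p}} \le 2\norm{f}_{\mathcal{F}}$. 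Both inequalities hold with $C=2$.

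\textbf{Main obstacle.} The only delicate point is the lower bound: the $L^p$ norms with respect to $\mu$ and $\textup{d}x$ are not comparable in general. This is handled by passing through the uniform norm, which is controlled in both directions by the oscillation bound. The pointwise evaluation there is legitimate because $f$ is continuous, and in fact only measurability of $f$ and the finiteness of $\mu$ are needed for the existence of the good point $y$.
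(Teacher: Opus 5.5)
Your proof is correct and is essentially the paper's argument. Both rest on the oscillation bound $\abs{h(y)-h(z)} \leq \norm{\nabla h}_{L^p(\textup{d}x)}$ coming from the fundamental theorem of calculus, together with the fact that $\mu$ and $\textup{d}x$ are probability measures; the only difference is that you route both comparisons through $\norm{\,\cdot\,}_\infty$ via a point where $\abs{h}$ (resp.\ $\abs{f}$) lies below its mean, obtaining $C=2$, whereas the paper integrates the splitting $h(y)=(h(y)-h(x))+h(x)$ against the other probability measure and applies Jensen's inequality directly at the level of $L^p$ norms.
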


\begin{proof}
    Note that, by applying Hölder's inequality to \eqref{eq:abs-cont}, we get
    \begin{equation}\label{eq:Morrey}
        \abs{h(y) - h(z)} \leq \norm{\nabla h}_{L^p(\textup{d}x)}
    \end{equation}
    for all $h \in W^{1,p}$ and $y,z \in [0,1]$.

    Now, the map $f \mapsto f \circ \mathscr{U}^{-1}$ is a linear bijection $\mathcal{F} \to W^{1,p}$ by definition. 
    Thus, we need to check the comparability of the norms. Since $\mathcal{E}(f) = \norm{\nabla (f \circ \mathscr{U}^{-1})}_{L^p(\textup{d}x)}$, we only need to estimate the $L^p$-norms.
    Let $\nu := \mathscr{U}_*(\mu)$. By the formula \eqref{eq:Push-forward} and the fact that $\nu$ is a probability measure, it holds for all $f \in \mathcal{F}$ that
    \begin{align*}
        & \quad \, \int_0^1 \abs{f}^p \, \textup{d}\mu = \int_0^1 \abs{f \circ \mathscr{U}^{-1}}^p \, \textup{d}\nu\\
        \leq & \quad 2^{p-1} \left(\int_0^1 \int_{0}^1 \abs{(f \circ \mathscr{U}^{-1})(x) - (f \circ \mathscr{U}^{-1})(y)}^p\,\textup{d}x\, \textup{d}\nu(y) + \int_0^1 \abs{f \circ \mathscr{U}^{-1}}^p \,\textup{d}x\right) \\
        \leq & \quad 2^{p-1} \left( \int_0^1 \abs{\nabla(f \circ \mathscr{U}^{-1})}^p\, \textup{d}x + \int_0^1 \abs{f \circ \mathscr{U}^{-1}}^p\, \textup{d}x\right) = 2^{p-1} \norm{f \circ \mathscr{U}^{-1}}_{W^{1,p}}^p. 
    \end{align*}
    We also used $\abs{a+b}^p \leq 2^{p-1} (\abs{a}^p + \abs{b}^p)$ and Jensen's inequality in the second line, and \eqref{eq:Morrey} in the third.
    This proves the latter inequality in the claim.
    The former follows from a similar computation
    \begin{align*}
        & \quad \, \int_0^1 \abs{f \circ \mathscr{U}^{-1}}^p \, \textup{d}x\\
        \leq & \quad 2^{p-1} \left(\int_0^1\int_0^1 \abs{f \circ \mathscr{U}^{-1}(x) - f \circ \mathscr{U}^{-1}(y)}^p \, \textup{d}x\,\textup{d}\nu(y) + \int_0^1 \abs{f \circ \mathscr{U}^{-1}}^p \, \textup{d}\nu\right) \\
        \leq & \quad 2^{p-1} \left(\int_0^1 \abs{\nabla(f \circ \mathscr{U}^{-1})}^p\, \textup{d}x + \int_0^1 \abs{f}^p\, \textup{d}\mu\right) = 2^{p-1}\norm{f}_{\mathcal{F}}^p.
    \end{align*}
\end{proof}

The following properties of $(\mathcal{E},\mathcal{F})$ are needed in our proofs.

\begin{proposition}\label{prop:F-properties}
    The following conditions hold.
    \begin{enumerate}
        \item $(\mathcal{F},\norm{\,\cdot\,}_\mathcal{F})$ is a Banach space.
        \item Let $f \in \mathcal{F}$ and $A \subset [0,1]$ be a Borel set so that $f$ is constant on $A$. Then $\textup{D}f(x) = 0$ for $\Lambda$-almost every $x \in A$.
        \item Let $f \in \mathcal{F}$. Then it holds for all $0 \leq z \leq y \leq 1$ that
        \[
           f(y) - f(z) = \int_z^y \textup{D}f \, \textup{d}\Lambda.
        \]
        In particular, if $\norm{\textup{D}f}_{L^p(\Lambda)}=0$, then $f$ is constant on $[0,1]$.
        \item Let $f \in \mathcal{C}$ so that there is $G \in L^p(\Lambda)$ satisfying
        \[
            f(y) = f(0) + \int_0^y G \, \textup{d}\Lambda
        \]
        for all $y \in [0,1]$. Then $f \in \mathcal{F}$ and $\textup{D}f = G$.
        \item If $f,g \in \mathcal{F}$ then $f \cdot g \in \mathcal{F}$ and
        \[
            \textup{D} (f\cdot g) = \textup{D}f \cdot g + f \cdot \textup{D}g.
        \]
    \end{enumerate}
\end{proposition}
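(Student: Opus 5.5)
The plan is to transfer every property through the linear bijection $f \mapsto h := f\circ\mathscr{U}^{-1}$ of Lemma \ref{lemma:comparability}, together with the change of variables $\int_{\mathscr{U}^{-1}(A)} (F\circ\mathscr{U})\,\textup{d}\Lambda = \int_A F\,\textup{d}x$, which is \eqref{eq:Push-forward} combined with $\mathscr{U}_*(\Lambda)=\textup{d}x$. Each item is then reduced to the corresponding classical fact for $W^{1,p}$ on $(0,1)$.

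For (1), Lemma \ref{lemma:comparability} gives a linear bijection onto the Banach space $W^{1,p}$ with equivalent norms, so completeness transfers. For (3), apply \eqref{eq:abs-cont} to $h$ at the points $\mathscr{U}(y)$ and $\mathscr{U}(z)$. Since $\mathscr{U}$ is an increasing homeomorphism, $\mathscr{U}^{-1}([\mathscr{U}(z),\mathscr{U}(y)])=[z,y]$, and the change of variables gives
\[
    f(y)-f(z)=\int_{\mathscr{U}(z)}^{\mathscr{U}(y)}\nabla h\,\textup{d}x=\int_z^y \textup{D}f\,\textup{d}\Lambda .
\]
If $\norm{\textup{D}f}_{L^p(\Lambda)}=0$, the integrand vanishes $\Lambda$-a.e., so $f$ is constant.

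For (4), set $h:=f\circ\mathscr{U}^{-1}$ and $g:=G\circ\mathscr{U}^{-1}$. Then $g\in L^p(\textup{d}x)$, because $\int\abs{g}^p\,\textup{d}x=\int\abs{G}^p\,\textup{d}\Lambda$. Reversing the change of variables gives $h(t)=h(0)+\int_0^t g\,\textup{d}x$. Hence $h$ is absolutely continuous with weak derivative $g$, so $h \in W^{1,p}$, and $\textup{D}f=g\circ\mathscr{U}=G$ $\Lambda$-a.e.

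For (2), $h$ is constant on the Borel set $\mathscr{U}(A)$. The classical fact that the weak derivative of a Sobolev function vanishes a.e. on a level set gives $\nabla h=0$ $\textup{d}x$-a.e. on $\mathscr{U}(A)$. To prove it, apply Lebesgue's differentiation theorem at density points of $\mathscr{U}(A)$ where $h$ is differentiable with derivative $\nabla h$; at such points $h'$ equals the limit of difference quotients along points of $\mathscr{U}(A)$, which are zero. Transferring by the change of variables with $\mathscr{U}^{-1}(\mathscr{U}(A))=A$ gives $\textup{D}f=0$ $\Lambda$-a.e. on $A$.

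For (5), $h_fh_g=(fg)\circ\mathscr{U}^{-1}$ is a product of absolutely continuous functions on a compact interval. It is therefore absolutely continuous, with $\nabla(h_fh_g)=\nabla h_f\,h_g+h_f\,\nabla h_g$. This lies in $L^p$ since $h_f,h_g$ are bounded by continuity. Composing with $\mathscr{U}$ gives the Leibniz formula $\Lambda$-a.e.

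I expect the main obstacle to be (2). It is the only item that needs a genuine pointwise differentiation argument rather than formal transfer, and measurability of $\mathscr{U}(A)$ must be noted; it holds because $\mathscr{U}$ is a homeomorphism. The other items are routine bookkeeping with the push-forward identity.
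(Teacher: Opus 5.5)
Your proposal is correct and follows essentially the same route as the paper: every item is transferred through $f \mapsto f\circ\mathscr{U}^{-1}$ and the identity $\mathscr{U}_*(\Lambda)=\textup{d}x$, with (1), (3) and (4) handled exactly as in the paper. The only difference is in (2) and (5): you prove the vanishing of the weak derivative on level sets (via density points) and the product rule (via products of absolutely continuous functions) directly, whereas the paper cites Evans--Gariepy, Theorem 4.4.
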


\begin{proof}
    Statement (1) follows from Lemma \ref{lemma:comparability} because $W^{1,p}$ is a Banach space; see Theorem 2 of \cite[Chapter 5]{evans}.

    For (2), note that if $f$ is constant on $A$, then $f \circ \mathscr{U}^{-1}$ is constant on $\mathscr{U}(A)$. By \cite[Theorem 4.4-(iv)]{Evans-Gariepy}, this implies that $\nabla (f\circ\mathscr{U}^{-1})=0$ $\textup{d}x$-almost everywhere on $\mathscr{U}(A)$, which gives the claim since $\mathscr{U}_*(\Lambda)=\textup{d}x$.

    Statement (3) follows by simply computing
    \begin{align*}
        f(y) - f(z) & = (f \circ \mathscr{U}^{-1})(\mathscr{U}(y)) - (f \circ \mathscr{U}^{-1})(\mathscr{U}(z))\\
        & = \int_{\mathscr{U}(z)}^{\mathscr{U}(y)} \nabla (f \circ \mathscr{U}^{-1})\, \textup{d}x = \int_{z}^{y} \textup{D}f\, \textup{d}\Lambda.
    \end{align*}
    In the second row, we first used \eqref{eq:abs-cont}, and then applied \eqref{eq:Push-forward} together with the equality $\mathscr{U}_*(\Lambda) = \textup{d}x$.

    If $f$ and $G$ are as in statement (4), it follows that
    \[
        (f \circ \mathscr{U}^{-1})(y) = f(0) + \int_0^{\mathscr{U}^{-1}(y)} G \, \textup{d}\Lambda = f(0) + \int_0^{y} G \circ \mathscr{U}^{-1} \, \textup{d}x.
    \]
    We used $\mathscr{U}_*(\Lambda)= \textup{d}x$ in the last equality. In particular, $(f \circ \mathscr{U}^{-1}) : [0,1]\to \R$ is absolutely continuous with $\nabla (f \circ \mathscr{U}^{-1}) = G \circ \mathscr{U}^{-1} \in L^p(\textup{d}x)$. Hence, the claim follows from Definition \ref{def:p-EF} and the definition of $\textup{D}$.

    Finally, to prove (5), recall first that the space of bounded Sobolev functions is closed under multiplication, see \cite[Theorem 4.4-(i)]{Evans-Gariepy}. From this, it easily follows that $\mathcal{F}$ is also closed under multiplication. Moreover, by the usual Leibniz rule of Sobolev functions \cite[Theorem 4.4-(ii)]{Evans-Gariepy} and the definition of the derivation $\textup{D}$, it holds for all $f,g \in \mathcal{F}$ that
    \begin{align*}
        \textup{D}(f \cdot g) & = (\nabla [( f \circ \mathscr{U}^{-1} ) \cdot (g \circ \mathscr{U}^{-1} ] ) \circ \mathscr{U}\\
        & = (\nabla(f \circ \mathscr{U}^{-1}) \cdot (g \circ \mathscr{U}^{-1} ) +  (f \circ \mathscr{U}^{-1} )\cdot \nabla(g \circ \mathscr{U}^{-1} )) \circ \mathscr{U}\\
        & = \textup{D}f \cdot g + f \cdot \textup{D}g.
    \end{align*}
\end{proof}

\subsection{Dirichlet form}\label{sec:DF}
We finish the section by verifying that for $p=2$ our construction produces Dirichlet forms. Let us briefly recall the definitions relevant to the article; see for instance \cite[Section 2]{KM-singularity20} for further details and \cite{FOT,ChenFukushima} for the general theory.

Let $(X,d,\mu)$ be a locally compact, separable metric measure space. Denote by $L^2(\mu)$ the Lebesgue space, and by $C_c(X)$ the set of compactly supported continuous functions $X \to \R$. A \emph{Dirichlet form} on $L^2(\mu)$ is a pair $(\mathcal{E},\mathcal{F})$ consisting of a dense linear subspace $\mathcal{F}\subset L^2(\mu)$ and a non-negative definite bilinear form $\mathcal{E}\colon \mathcal{F}\times \mathcal{F}\to \R$, which is closed and Markovian. Being \emph{closed} means that $\mathcal{F}$ equipped with the inner product $(f,g)\mapsto \int f\cdot g\,\textup{d}\mu+\mathcal{E}(f,g)$ is a Hilbert space, and being \emph{Markovian} is a requirement that $f\in \mathcal{F}$ implies $\tilde{f}\in \mathcal{F}$ and $\mathcal{E}(\tilde{f},\tilde{f})\leq \mathcal{E}(f,f)$ where $\tilde{f}=\min\{\max\{f,0\},1\}$.
Furthermore,  $(\mathcal{E},\mathcal{F})$ is called \emph{regular} if $\mathcal{F}\cap C_c(X)$ is dense in both $(C_c(X),\norm{\,\cdot\,}_{\infty})$ and $(\mathcal{F},\norm{\,\cdot\,}_{\mathcal{F}})$, where $\norm{\,\cdot\,}_\infty$ is the uniform norm and $\|f\|_{\mathcal{F}}:=\|f\|_{L^2(\mu)}+\sqrt{\mathcal{E}(f,f)}$ for $f\in\mathcal{F}$. Finally, $(\mathcal{E},\mathcal{F})$ is  \emph{strongly local}, if $\mathcal{E}(f,g)=0$ when $f,g\in\mathcal{F}$ with $\supp_{\mu}[f]$ and $\supp_{\mu}[g]$ compact, and $f$ is constant $\mu$-almost everywhere on an open neighborhood of $\supp_{\mu}[g]$. Here $\supp_{\mu}[f]$ denotes the $\mu$-essential support of $f$ which is the smallest closed set $F\subset X$ satisfying $\int_{X\setminus F}|f|\,\textup{d}\mu=0$.

Let us return back to the setting of this article and define the two variable form $\mathcal{E}\colon \mathcal{F}\times \mathcal{F}\to \R$ by 
\[
    \mathcal{E}(f,g) := \int_0^1 \nabla(f \circ \mathscr{U}^{-1}) \cdot (g \circ \mathscr{U}^{-1})\, \textup{d}x = \int_0^1 \textrm{D} f \cdot \textrm{D} g\, \textup{d}\Lambda.
\]
Note that, in the notation of the previous subsection, $\mathcal{E}(f,f) = \mathcal{E}(f)$ when $p=2$. The following result, which is essentially a corollary of Proposition \ref{prop:F-properties} verifies that $(\mathcal{E},\mathcal{F})$ is a strongly local, regular Dirichlet form. See also \cite[Proposition 2.2.8]{ChenFukushima}.

\begin{corollary}\label{cor:DF}
    The two variable form $(\mathcal{E},\mathcal{F})$ defined as above is a strongly local, regular Dirichlet form on $L^2(\mu)$.
\end{corollary}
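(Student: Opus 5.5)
The plan is to transport every required property from the classical Sobolev space $W^{1,2}$ on $(0,1)$ through the bijection $f \mapsto f \circ \mathscr{U}^{-1}$ of Lemma \ref{lemma:comparability}, and to use Proposition \ref{prop:F-properties} for the calculus rules. Here $X=[0,1]$ is compact, so $C_c(X)=\mathcal{C}$. Bilinearity, symmetry and non-negativity of $\mathcal{E}(f,g)=\int_0^1 \textup{D}f\cdot \textup{D}g\,\textup{d}\Lambda$ are immediate from the formula.

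\textbf{Density and closedness.} The inner product $\int fg\,\textup{d}\mu+\mathcal{E}(f,g)$ induces a norm equivalent to $\norm{\,\cdot\,}_{\mathcal{F}}$. Completeness therefore follows from Proposition \ref{prop:F-properties}(1), so $\mathcal{F}$ is a Hilbert space.

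For density of $\mathcal{F}$ in $L^2(\mu)$, and also in $(\mathcal{C},\norm{\,\cdot\,}_\infty)$ as needed for regularity, I will argue as follows. Smooth functions $h$ on $[0,1]$ lie in $W^{1,2}$, so $h\circ\mathscr{U}\in\mathcal{F}$. Since $\mathscr{U}$ is a homeomorphism, the family $\{h\circ\mathscr{U}\}$ is a subalgebra of $\mathcal{C}$ that separates points and contains constants. Stone--Weierstrass then gives uniform density in $\mathcal{C}$. Because $\mu$ is finite and $\mathcal{C}$ is dense in $L^2(\mu)$, this also yields density in $L^2(\mu)$.

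Regularity additionally requires $\mathcal{F}\cap\mathcal{C}=\mathcal{F}$ to be dense in $\mathcal{F}$, which is trivial since $\mathcal{F}\subset\mathcal{C}$.

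\textbf{Markov property.} Let $f\in\mathcal{F}$, $h=f\circ\mathscr{U}^{-1}\in W^{1,2}$, and let $\tilde f=\min\{\max\{f,0\},1\}$. Then $\tilde f\circ\mathscr{U}^{-1}=\min\{\max\{h,0\},1\}$ belongs to $W^{1,2}$. Its weak derivative is $\nabla h\cdot\mathbbm{1}_{\{0<h<1\}}$ almost everywhere, by the standard truncation result (Evans--Gariepy, Theorem 4.4). Hence $\tilde f\in\mathcal{F}$ and $\mathcal{E}(\tilde f,\tilde f)\le\mathcal{E}(f,f)$.

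\textbf{Strong locality.} Suppose $f$ is constant $\mu$-a.e.\ on an open set $U\supset\supp_\mu[g]$. Since $f$ is continuous and $\mu$ has full support, $f$ is constant everywhere on $U$. By Proposition \ref{prop:F-properties}(2), $\textup{D}f=0$ $\Lambda$-a.e.\ on $U$.

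Next, $g=0$ on the open set $[0,1]\setminus\supp_\mu[g]$, again using continuity and full support. Applying Proposition \ref{prop:F-properties}(2) once more gives $\textup{D}g=0$ $\Lambda$-a.e.\ there. The two sets cover $[0,1]$, so $\textup{D}f\cdot\textup{D}g=0$ $\Lambda$-a.e. and $\mathcal{E}(f,g)=0$.

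\textbf{Main obstacle.} The only step needing care is passing from ``$\mu$-a.e.'' statements to pointwise ones. This relies on continuity of elements of $\mathcal{F}$ together with the full support of $\mu$. Everything else is routine transfer along $\mathscr{U}$.
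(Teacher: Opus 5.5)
Your proposal is correct and follows essentially the same route as the paper: closedness from Proposition \ref{prop:F-properties}(1), the Markov property from the Evans--Gariepy truncation rule, uniform density by composing smooth functions with $\mathscr{U}$, and strong locality from Proposition \ref{prop:F-properties}(2). You invoke Stone--Weierstrass where the paper approximates $f\circ\mathscr{U}^{-1}$ by smooth functions directly, which is equivalent, and you spell out two points the paper leaves implicit: the density of $\mathcal{F}$ in $L^2(\mu)$, and the passage from $\mu$-a.e.\ to pointwise statements via continuity and the full support of $\mu$.
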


\begin{proof}
    It is clear from the definition that $(\mathcal{E},\mathcal{F})$ is bilinear and non-negative definite. The fact that $(\mathcal{E},\mathcal{F})$ is closed follows from Proposition \ref{prop:F-properties}-(1) and the Markov property may easily be deduced from \cite[Theorem 4.4-(iii)]{Evans-Gariepy}.
    
    To check that $(\mathcal{E},\mathcal{F})$ is regular, note that $\mathcal{F} \subset \mathcal{C}$, meaning the first density is trivial. For the density $\mathcal{F} \cap \mathcal{C}$ in the uniform norm we argue as follows. Let $f \in \mathcal{C}$ and take a sequence of smooth functions $\varphi_n \in \mathcal{C}$ that approximates $f \circ \mathscr{U}^{-1}$ in $\norm{\,\cdot\,}_\infty$. Then $\varphi_n \circ \mathscr{U} \in \mathcal{F}$. Moreover, it follows from the uniform continuity of $\mathscr{U}$ that $\varphi_n \circ \mathscr{U} \to f$ in $\norm{\,\cdot\,}_\infty$.

    Finally strong locality follows from (2) of Proposition \ref{prop:F-properties}.
\end{proof}

\begin{remark}\label{rem:EM}
    Proposition \ref{prop:F-properties}-(5) implies that, when $p=2$, the Radon measure
    \[
    A \mapsto \frac{1}{2}\int_A \abs{\textup{D} f}^2\, \textup{d}\Lambda    
    \]
    is the \emph{energy measure} of $f \in \mathcal{F}$, see \cite[Section 3.2]{FOT}.
\end{remark}

\begin{remark}
    Consider a general $p \in (1,\infty)$ and define the Radon measures $\textup{d}\Gamma\langle f\rangle:=\abs{\textrm{D}f}^p\, \textup{d}\Lambda$.
    A similar argument as in the previous two proofs shows that the collection $([0,1],\abs{\,\cdot\,},\mu,\mathcal{E},\Gamma,\mathcal{F})$ is a $p$-Dirichlet space in the sense of \cite{EID2026}. The weak lower semicontinuity can be deduced, for instance, from a combination of Morrey inequality \cite[Theorem 4.10]{Evans-Gariepy} (see also the 6th exercise problem of \cite[Chapter 5]{evans}), Arzela-Ascoli theorem, and \cite[Theorem 5.2]{Evans-Gariepy}.
    Alternatively, suitable convexity properties of $\mathcal{E}$ can also be exploited \cite[Proposition 3.18]{KajinoShimizu}. The details are left to an interested reader.
\end{remark}

\section{Proof of Theorem \ref{thm-2}}\label{sec:4}
This section is dedicated to proving Theorem \ref{thm-2}, which we state here for general $p \in [1,\infty)$.
Throughout the section, we let $\Lambda,\mu,p$ and $(\mathcal{E},\mathcal{F})$ be as in Section \ref{sec:Framework}.

\begin{theorem}\label{thm:general-p}
    Let $\alpha \in (0,1]$. Then there exists a non-constant $\alpha$-Hölder continuous $f \in \mathcal{F}$ if and only if $\Lambda$ is generalized $\alpha$-Frostman.
\end{theorem}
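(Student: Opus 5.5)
We prove the two implications separately. Both rest on the representation $f(y)-f(z)=\int_z^y \textup{D}f\,\textup{d}\Lambda$ from Proposition \ref{prop:F-properties}-(3).

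\emph{Sufficiency.} Assume $\Lambda$ is generalized $\alpha$-Frostman. Following the footnote after Definition \ref{def:Frostman}, we first pass to a compact set $K\subset E$ with $\Lambda(K)>0$ and constants $C,R>0$ such that $\Lambda(B(x,r))\le Cr^\alpha$ for all $x\in K$ and $r\in(0,R)$. To get this, write $E=\bigcup_{n} E_n$, where $E_n$ is the set of $x\in E$ with $\Lambda(B(x,r))\le n r^\alpha$ for $r<1/n$. Some $E_n$ has positive outer measure, and we then use inner regularity; measurability of $E_n$ should follow from lower semicontinuity of $x\mapsto\Lambda(B(x,r))$. Next we upgrade to a global bound on $K$: for all $x\in K$ and all $r>0$ we have $\Lambda(B(x,r))\le C'r^\alpha$, because for $r\ge R$ one can use $\Lambda\le 1\le R^{-\alpha}r^\alpha$. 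The candidate function is
\[
f(y):=\int_0^y \mathbf 1_K\,\textup{d}\Lambda .
\]
By Proposition \ref{prop:F-properties}-(4) we get $f\in\mathcal{F}$ with $\textup{D}f=\mathbf 1_K$. Moreover $f$ is non-constant since $f(1)=\Lambda(K)>0$.

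For the Hölder bound, fix $z<y$ and note $f(y)-f(z)=\Lambda(K\cap[z,y])$. If $K\cap[z,y]=\emptyset$ this is zero. Otherwise pick $x\in K\cap[z,y]$; then $[z,y]\subset B(x,2|y-z|)$, so $f(y)-f(z)\le C'2^\alpha|y-z|^\alpha$. Hence $f$ is $\alpha$-Hölder.

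\emph{Necessity.} Let $f\in\mathcal{F}$ be non-constant and $\alpha$-Hölder with constant $M$. By Proposition \ref{prop:F-properties}-(3), $\textup{D}f\neq 0$ on a set of positive $\Lambda$-measure. Since $\textup{D}f\in L^p(\Lambda)\subset L^1(\Lambda)$, the Lebesgue differentiation theorem for the Radon measure $\Lambda$ on $\mathbb R$ (Besicovitch) gives, for $\Lambda$-a.e.\ $x$,
\[
\frac{1}{\Lambda(B(x,r))}\int_{B(x,r)}\textup{D}f\,\textup{d}\Lambda\to\textup{D}f(x).
\]
Let $E$ be the set of such $x$ with $\textup{D}f(x)\ne 0$; then $\Lambda(E)>0$. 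For $x\in E$, since $\Lambda$ has no atoms, $\int_{B(x,r)}\textup{D}f\,\textup{d}\Lambda=f(x+r)-f(x-r)$ (truncated to $[0,1]$ near the endpoints). The Hölder condition bounds this by $M(2r)^\alpha$. Combining with the differentiation limit yields
\[
\Lambda(B(x,r))\le \frac{2M(2r)^\alpha}{|\textup{D}f(x)|}
\]
for all small $r$. This is \eqref{eq:Frostman} on $E$, so $\Lambda$ is generalized $\alpha$-Frostman.

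The main obstacle is the necessity direction, specifically justifying the differentiation theorem for a general (non-doubling) $\Lambda$. On the line this is fine via Besicovitch covering. Alternatively, we can transport to Lebesgue measure: under $\mathscr{U}$, the ball $B(x,r)$ maps to an interval of length $\Lambda(B(x,r))$ containing $\mathscr{U}(x)$. The classical one-dimensional Lebesgue differentiation theorem, applied to $\nabla(f\circ\mathscr{U}^{-1})$ with non-centered intervals, then gives the same conclusion. The measurability reduction in the sufficiency direction is routine.
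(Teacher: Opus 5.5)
Your proof is correct and follows the paper's argument in both directions. For sufficiency you take the primitive $\int_0^y \mathbf{1}_K\,\textup{d}\Lambda$ of the indicator of a uniformly Frostman subset; for necessity you combine the Besicovitch--Lebesgue differentiation theorem (the paper cites Mattila, Cor.~2.14) with Proposition \ref{prop:F-properties}-(3). Your additions only fill in steps the paper leaves implicit: the Borel measurability of the sets $E_n$, the global constant $C'=\max\{C,R^{-\alpha}\}$, and choosing the center in $K\cap[z,y]$ rather than restricting to $|x-y|\le R/4$.
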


\begin{proof}
    Let us begin with the forward implication. Let $f \in \mathcal{F}$ be a non-constant $\alpha$-Hölder continuous function, and fix a Borel measurable $\Lambda$-representative of $\textup{D}f$.
    Since $f$ is non-constant, the set $F := \{ x \in (0,1) : \textup{D}f \neq 0 \}$ has $\Lambda(F)>0$ by Proposition \ref{prop:F-properties}-(3). We let $E \subset F$ be the subset of $F$ consisting of the points $x \in F$ satisfying
    \[
         \textup{D}f(x) = \lim_{r \downarrow 0} \frac{1}{\Lambda(B(x,r))} \int_{B(x,r)} \textup{D}f \, \textup{d}\Lambda =  \lim_{r \downarrow 0} \frac{1}{\Lambda(B(x,r))} \int_{x-r}^{x+r} \textup{D}f \, \textup{d}\Lambda.
    \]
    The set $E$ has $\Lambda(E)>0$ by \cite[Corollary 2.14]{Mattila}. It remains to verify \eqref{eq:Frostman} for every $x\in E$. To this end, first compute
    \begin{align*}
        0 & < \abs{\textup{D}f(x)} = \lim_{r \downarrow 0} \left| \frac{1}{\Lambda(B(x,r))} \int_{x-r}^{x+r} \textup{D}f \,\textup{d}\Lambda \right| = \lim_{r \downarrow 0} \frac{\abs{f(x+r) - f(x-r)}}{\Lambda(B(x,r))}.
    \end{align*}
    The second equality follows from Proposition \ref{prop:F-properties}-(3).
    Then the $\alpha$-Hölder continuity of $f$ gives the estimate
    \[
        \lim_{r \downarrow 0} \frac{\abs{f(x+r) - f(x-r)}}{\Lambda(B(x,r))} \leq \liminf_{r \downarrow 0} \frac{2K r^\alpha}{\Lambda(B(x,r))}.
    \]
    A combination of the previous two displays yields
    \[
        \limsup_{r \downarrow 0} \frac{\Lambda(B(x,r))}{r^\alpha} \leq \frac{2K}{\abs{\textup{D}f(x)}} < \infty.
    \]
    Hence, $\Lambda$ is generalized $\alpha$-Frostman.

    Then we prove the backward implication. Let $E \subset [0,1]$ be as in Definition \ref{def:Frostman}. By taking a subset of $E$ if necessary, we may assume existence of $C,R > 0$ so that
    \begin{equation}\label{eq:Unif-Frostman}
        \frac{\Lambda(B(x,r))}{r^\alpha} \leq C
    \end{equation}
    for all $x \in E$ and $r \in (0,R)$, in other words, \eqref{eq:Frostman} holds in a uniform sense.
    Now, let $\mathds{1}_{E}$ be the characteristic function of $E$, and define
    \[
    f(x) := \int_0^x \mathds{1}_{E} \, \textup{d}\Lambda.
    \]
    It follows from Proposition \ref{prop:F-properties}-(4) that $f \in \mathcal{F}$.
    Since $f(0) = 0$ and $f(1) = \Lambda(E)>0$, it is non-constant.
    It remains to prove that $f$ is $\alpha$-Hölder.
    Note that it suffices show the Hölder continuity for points $x,y \in [0,1]$ with $\abs{x-y} \leq R/4$. Take such $x,y$ and assume $x \leq y$.
    If we would have $B(x,2\abs{x-y}) \cap E = \emptyset$ then $f$ is obviously constant on $B(x,2\abs{x-y})$. In particular, $\abs{f(x)-f(y)}=0$.
    Otherwise, by taking a point $z \in B(x,2\abs{x-y}) \cap E \neq \emptyset$ we get
    \[
    \abs{f(y) - f(x)} \leq \Lambda([x,y]) \leq \Lambda(B(z,4\abs{x-y})) \leq C 4^\alpha \abs{x-y}^\alpha.
    \]
    The last inequality follows from \eqref{eq:Unif-Frostman}.
\end{proof}

\begin{proof}[Proof of Theorem \ref{thm-2}]
The claim follows from Theorem \ref{thm:general-p} by taking $p=2$.
\end{proof}

Finally, we record a version of Corollary \ref{cor:delta_*} for the case of general $p\in[1,\infty)$. The claim follows immediately from Lemma \ref{lemma:upperHausdorff} and Theorem \ref{thm:general-p}. Recall that $\delta_*$ denotes the critical Hölder exponent as defined in the introduction.

\begin{corollary}\label{cor:general-p-delta_*}
    It holds that $\delta_*=\udimh\Lambda$.
\end{corollary}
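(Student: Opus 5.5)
The plan is to combine Lemma \ref{lemma:upperHausdorff} with Theorem \ref{thm:general-p}, splitting according to the position of $\alpha$ relative to $\udimh\Lambda$. Write $d := \udimh\Lambda \in [0,1]$; note that $d \le 1$ for any Radon measure on $[0,1]$, although this is not really needed. Let $S := \{\alpha \in (0,1] : \text{there is a non-constant $\alpha$-Hölder } f \in \mathcal{F}\}$, so that $\delta_* = \sup S$, with the convention $\sup\emptyset = 0$. By Theorem \ref{thm:general-p}, $S$ is exactly the set of $\alpha\in(0,1]$ for which $\Lambda$ is generalized $\alpha$-Frostman.

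\textbf{Lower bound.} If $d>0$, take any $\alpha\in(0,d)$. By the first half of Lemma \ref{lemma:upperHausdorff}, $\Lambda$ is generalized $\alpha$-Frostman, so $\alpha\in S$. Letting $\alpha\uparrow d$ gives $\delta_*\ge d$. If $d=0$ this inequality is trivial, since $\delta_*\ge 0$ by convention.

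\textbf{Upper bound.} Take any $\alpha\in(d,1]$. By the second half of Lemma \ref{lemma:upperHausdorff}, $\Lambda$ is not generalized $\alpha$-Frostman, so $\alpha\notin S$. Hence $S\subset(0,d]$ and $\delta_*=\sup S\le d$. When $d=1$ this is vacuous. When $d=0$ it shows $S=\emptyset$, so $\delta_*=0$ by the convention, which matches $d$.

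\textbf{Main obstacle.} The only delicate point is bookkeeping at the endpoints. The lemma gives no information at $\alpha=d$ itself, but this does not affect the value of the supremum. One should also check that the convention $\delta_*=0$ for empty $S$ is consistent with the case $d=0$, which the upper-bound step does. Everything else is immediate from the two cited results.
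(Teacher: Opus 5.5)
Your argument is correct and matches the paper's, which says the corollary follows immediately from Lemma \ref{lemma:upperHausdorff} and Theorem \ref{thm:general-p}; your endpoint bookkeeping, including $S=\emptyset$ when $\udimh\Lambda=0$, is fine. One small correction: the bound $\udimh\Lambda\le 1$ is in fact needed, because $\delta_*\le 1$ by definition and your lower-bound step only applies to $\alpha\in(0,1]$, and it follows from Lemma \ref{lemma:Frostman-Lemma} with $E=[0,1]$.
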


\section{Proof of Theorem \ref{thm-1} and examples}\label{sec:examples}
This section provides the examples that, together with Theorem \ref{thm-2}, yield a proof of Theorem \ref{thm-1}.
All of the presented constructions are standard in fractal geometry. Nevertheless, since we expect that our audience is more inclined towards analysis, we describe them in detail.

\subsection{Mass distribution principle}
Let $E \subset [0,1]$ be a Borel set. Given $\alpha \in (0,1]$, the $\alpha$-\emph{Hausdorff content} of $E$ is defined as
\[
    \mathcal{H}_\infty^\alpha(E) := \inf \left\{ \sum_{i=1}^\infty r_i^\alpha \colon E \subset \bigcup_{i=1}^\infty B(x_i,r_i)   \right\}.
\]
The \emph{Hausdorff dimension} of $E$ is then the value
\[
    \dimh E := \sup \{ \alpha \in (0,1] : \mathcal{H}_\infty^\alpha(E) > 0 \},
\]
where $\sup \emptyset = 0$. A deep understanding of these definitions is not needed in this article because they are used only through the following \emph{mass distribution principle}.
For further details, we refer to \cite[Proposition 10.3]{Falconer}.

\begin{lemma}\label{lemma:Frostman-Lemma}
    Let $\nu$ be a Radon measure on $[0,1]$. If $E \subset [0,1]$ is a Borel set with $\nu([0,1]\setminus E)=0$ then $\udimh\nu\leq\dimh E$.
\end{lemma}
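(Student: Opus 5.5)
We prove the contrapositive form: given $\alpha > \dimh E$, we show $\alpha \geq \udimh\nu$. Equivalently, we show that if $\alpha < \udimh\nu$ then $\mathcal{H}^\alpha_\infty(E) > 0$. If $\udimh\nu = 0$ there is nothing to prove, so assume $0 \le \alpha < \udimh\nu$ (with $\alpha > 0$ when needed; the case $\alpha$ near $0$ is harmless since only $\alpha\in(0,1]$ enters $\dimh$).

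\emph{Step 1: a uniform Frostman piece.} By Lemma \ref{lemma:upperHausdorff}, $\nu$ is generalized $\alpha$-Frostman. Arguing as in the proof of Theorem \ref{thm:general-p}, we can pass to a subset to get a Borel set $A$ with $\nu(A) > 0$ and constants $C,R>0$ such that $\nu(B(x,r)) \le C r^\alpha$ for all $x \in A$ and $r\in(0,R)$. Concretely, let $A_n$ be the set of $x$ with $\nu(B(x,r)) \le n r^\alpha$ for all $r < 1/n$. These sets increase to the Frostman set, so some $A_n$ has positive measure; measurability follows by restricting to rational $r$ and using the monotonicity of $r\mapsto\nu(B(x,r))$. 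Replacing $A$ by $A\cap E$, we may also assume $A \subset E$, since $\nu(A\setminus E)=0$.

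\emph{Step 2: mass distribution.} Take any cover $A\cap E \subset \bigcup_i B(x_i,r_i)$. We may discard balls not meeting $A$. If some $r_i \geq R/2$, then $\sum_i r_i^\alpha \ge (R/2)^\alpha$ and this is already a positive lower bound. Otherwise, every ball that meets $A$ at some point $z_i$ satisfies $B(x_i,r_i)\subset B(z_i,2r_i)$, with $2r_i<R$. Hence
\[
0<\nu(A) \le \sum_i \nu(B(z_i,2r_i)) \le C2^\alpha \sum_i r_i^\alpha .
\]
Taking the infimum over covers gives $\mathcal{H}^\alpha_\infty(E)\ge\mathcal{H}^\alpha_\infty(A)\ge\min\{(R/2)^\alpha,\nu(A)/(C2^\alpha)\}>0$. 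Therefore $\dimh E\ge\alpha$, and letting $\alpha\uparrow\udimh\nu$ gives the claim.

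The only delicate point is Step 1, namely making the uniform constants measurable and compatible with $E$. The rest is the standard mass distribution estimate.
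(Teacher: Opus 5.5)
The paper does not prove this lemma; it only cites the mass distribution principle \cite[Proposition 10.3]{Falconer}. Your Steps 1 and 2 are the standard proof of that principle: you make the Frostman bound uniform on a Borel set $A\subset E$ of positive measure, and then bound any ball cover of $A$ from below using a point of $A$ in each ball. For $\alpha\in(0,1]$ with $\alpha<\udimh\nu$ everything you write is correct, including the measurability of $A_n$ and the reduction to $A\subset E$.

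There is one small but genuine gap in your last sentence. Since $\dimh E$ is defined as a supremum over $\alpha\in(0,1]$, the implication $\mathcal{H}^\alpha_\infty(E)>0\Rightarrow\dimh E\ge\alpha$ is only available for $\alpha\le 1$. So letting $\alpha\uparrow\udimh\nu$ gives only $\dimh E\ge\min\{\udimh\nu,1\}$, and you still have to rule out $\udimh\nu>1$.

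This is not automatic from the definitions, since $\ldimloc(\nu,x)$ can exceed $1$ at individual points (and is $+\infty$ off the support). It is also exactly how the paper uses the lemma, to get $\udimh\nu\le 1$ in Example \ref{prop:not-gen-delta-frostman} and again for $\frac12(\nu_s+\textup{d}x)$.

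The fix is short. For $\alpha\in(1,\udimh\nu)$, run Step 1 directly with the set $\{x:\ldimloc(\nu,x)>\alpha\}$ from the proof of Lemma \ref{lemma:upperHausdorff}; its statement is phrased via Definition \ref{def:Frostman}, which only covers $\alpha\le 1$. Step 2 then applies verbatim and gives $\mathcal{H}^\alpha_\infty([0,1])\ge\mathcal{H}^\alpha_\infty(A)>0$. This contradicts $\mathcal{H}^\alpha_\infty([0,1])\le (N+1)N^{-\alpha}\to 0$, which you get by covering $[0,1]$ with the balls $B(k/N,1/N)$, $k=0,\dots,N$.
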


\subsection{Examples}

We provide the examples that complete the proof of Theorem \ref{thm-1}.

\begin{example}\label{prop:not-gen-delta-frostman}
    For any $\delta \in (0,1]$ there exists a non-atomic Radon probability measure $\nu$ on $[0,1]$ with full topological support that satisfies $\udimh\nu=\delta$.
    Moreover, $\nu$ can be chosen so that it is not generalized $\delta$-Frostman.
\end{example}

\begin{proof}
    We first take care of the case $\delta \in (0,1)$, for which the construction is simple; one can choose $\nu$ to be any fully supported self-similar measure satisfying the open set condition and $\udimh\nu=\delta$. For concreteness let us describe a construction of such a measure more precisely. Choose a parameter $0<\rho<1/2$, assign mass $\rho$ to the interval $[0,1/2]$ and mass $1-\rho$ to the interval $[1/2,1]$. Iterate the construction by subdividing each interval in the previous step of the construction into two dyadic subintervals, and assigning mass $\rho$ times the mass of the parent to the left subinterval and $1-\rho$ times the mass of the parent to the right one. Continuing this process \emph{ad infinitum}, and using Carathedory's extension theorem, one obtains a non-atomic Radon probability measure $\nu$ whose support is the full interval $[0,1]$. It is well known that $\nu$ satisfies
    \begin{equation*}
        \udimh \nu=\frac{\rho\log \rho+(1-\rho)\log(1-\rho)}{-\log 2}.
    \end{equation*}
    This follows from the law of large numbers, see for example \cite[Proposition 10.4]{Falconer}. Certainly, for a given $\delta \in (0,1)$, one can then choose the parameter $\rho$ in such a way that $\udimh \nu=\delta$. Moreover, it is a well known fact in fractal geometry that
    \begin{equation*}
        \limsup_{r\downarrow 0}\frac{\nu(B(x,r))}{r^{\delta}}=\infty,
    \end{equation*}
    for $\nu$-almost every $x \in [0,1]$, if $\delta \in (0,1)$.
    In other words, $\nu$ is not generalized $\delta$-Frostman. This fact is not too difficult to establish using the law of iterated logarithm, see for example \cite[Theorem 3.1]{BhouriHeurteaux09} and in particular the first displayed equation on their page 8.

    For the $\delta=1$ case, we have to be slightly more careful, since the above construction gives the Lebesgue measure which certainly is (generalized) $1$-Frostman. Instead, we choose a sequence of $\rho_n \in (0,1/2)$ tending to $1/2$, and let $\nu_n$ denote the measure constructed as above for the parameter $\rho_n$. Let $\nu$ be given by
    \begin{equation*}
        \nu :=\sum_{n=1}^{\infty}2^{-n}\nu_n.
    \end{equation*}
    It follows from Lemma \ref{lemma:upperHausdorff} and the discussion above that none of $\nu_n$ are generalized 1-Frostman. From this, it is easy to see that $\nu$ is not generalized 1-Frostman. It remains to show that $\udimh\nu =1$. First, by Lemma \ref{lemma:Frostman-Lemma}, $\udimh\nu$ is always bounded from above by the Hausdorff dimension of $[0,1]$.
    In other words, $\udimh\nu \leq 1$. Second, it is easy to check that $\udimh\nu \geq \udimh\nu_n$ for every $n \in \N$. Moreover, $\udimh\nu_n \to 1$ as $\rho_n \to 1/2$ by the first display in this proof. Hence $\udimh\nu \geq 1$. This concludes the case $\delta=1$, and completes the proof.
\end{proof}

\begin{example}\label{prop:0-dim-full-support}
    There exists a non-atomic Radon probability measure $\nu$ on $[0,1]$ with full topological support satisfying $\udimh\nu=0$.
\end{example}
\begin{proof}
    The construction is a simple modification of the one in Example \ref{prop:not-gen-delta-frostman}. Instead of assigning masses $\rho$ and $1-\rho$ to the subintervals, we fix a sequence $0<\rho_n\leq 1/2$ decreasing to $0$, satisfying $\prod_{n = 1}^{\infty} (1 - \rho_n)=0$, and at the $n$-th iteration, we assign a proportion of $\rho_n$ and $1-\rho_n$ of the masses of the parents to the resulting subintervals. Let $\nu$ be the resulting measure.
    
    Since the mass of an $k$-level dyadic interval can be bounded from above by $\prod_{n = 1}^k (1 - \rho_n)$, the choice of $\rho_n$ ensures that $\nu$ is non-atomic. It is also clear that $\nu$ has full topological support. Moreover, there is a well known formula for the upper Hausdorff dimension of $\nu$. For $k \in \N$ let $\mathcal{I}_k$ denote the set of $k$-level dyadic intervals. Then it holds that
    \[
        \udimh\nu= \limsup_{k\to\infty}\frac{-\sum_{I \in \mathcal{I}_k}\nu(I) \log \nu(I)}{k\log 2},
    \]
    see \cite[Theorem 3.1]{LiWu2011}.
    The right-hand side has a nice formula
    \[
        \sum_{I \in \mathcal{I}_k}\nu(I) \log \nu(I) = \sum_{n=1}^k(\rho_n \log \rho_n + (1-\rho_n)\log(1-\rho_n)).
    \]
    This follows from the iterative construction of $\nu$ and a simple induction on $k \in \N$.
    Now, since $\rho_n \to 0$, also $\rho_n \log \rho_n + (1-\rho_n)\log(1-\rho_n) \to 0$ as $n \to \infty$. By combining this with the above two displays, it is straightforward to conclude that $\udimh\nu=0$.
    \end{proof}

Recall the critical Hölder exponent $\delta_*$ from introduction. 

\begin{proof}[Proof of Theorem \ref{thm-1}]
    We first consider the case $\delta\in(0,1]$. Let $\Lambda = \nu$ be given by Example \ref{prop:not-gen-delta-frostman}, $\mu = \textup{d}x$, and $(\mathcal{E},\mathcal{F})$ be as in Definition \ref{def:p-EF} with $p=2$.
    Now, $(\mathcal{E},\mathcal{F})$ is a strongly local, regular Dirichlet form on $L^2(\mu)$ is proved in Corollary \ref{cor:DF}.
    The critical Hölder exponent $\delta_*$ of $([0,1],\abs{\,\cdot\,},\mu,\mathcal{E},\mathcal{F})$ is computed $\delta_* = \udimh \Lambda = \delta$ by a combination of Lemma \ref{lemma:upperHausdorff} and Theorem \ref{thm-2}.

    The case $\delta=0$ is handled similarly using Example \ref{prop:0-dim-full-support}.
\end{proof}

As it follows from Theorem \ref{thm-2} and Lemma \ref{lemma:upperHausdorff}, the critical Hölder exponent of the above examples is a strict supremum. The examples below show that this is not always the case; it may also be a maximum.
This gives an answer to (\textbf{Q2}) of Section \ref{sec:MainResult}.

\begin{example}\label{prop:is-gen-delta-frostman}
    For any $\delta \in (0,1]$ there exists a non-atomic  Radon probability measure $\nu$ on $[0,1]$ with full topological support so that $\udimh\nu = \delta$ and $\nu$ is generalized $\delta$-Frostman.
\end{example}
\begin{proof}
    For $\delta=1$, the Lebesgue measure is such an example, so it suffices to construct examples for $\delta \in (0,1)$. Our idea to this end is to find a dense $\delta$-dimensional fractal set and a suitable measure supported on it. Let us go through the details.

    Let $K \subset [0,1]$ be a compact subset for which the following holds. 
    There is $C \geq 1$ and a Radon probability measure $\mathcal{H}$ on $[0,1]$ satisfying $\mathcal{H}([0,1]\setminus K) = 0$ and
    \begin{equation}\label{eq:AR}
        C^{-1}r^\delta \leq \mathcal{H}(B(x,r)) \leq Cr^\delta
    \end{equation}
    for all $x \in K$ and $r \in (0,1)$.
    Such a $K$ may easily be found by suitably modifying the classical construction of the middle-third Cantor set and letting $\mathcal{H}$ be the restriction of the $\delta$-dimensional Hausdorff measure to $K$, see e.g. \cite[Corollary 3.3]{Falconer}.
    Then take a countable family of sets $K_n \subset [0,1]$ so that each $K_n$ is a rescaled and translated copy of $K$, and their union, denoted $Z \subset [0,1]$ is a dense subset.
    We also push-forward $\mathcal{H}$ along the same rescaling and translation, and denote by $\mathcal{H}_n$ the resulting Radon probability measure supported on $K_n$.

    Now, it follows from the upper bound of \eqref{eq:AR} that, for every $n \in \N$, there is $C_n \geq 1$ for which
    \[
        \mathcal{H}_n(B(x,r)) \leq C_n r^\delta
    \]
    holds for all $x \in [0,1]$ and $r > 0$.
    Finally, we define
    \[
        \nu := \left(\sum_{n \in \N} \frac{1}{2^n C_n } \right)^{-1}
        \sum_{n \in \N} \frac{1}{2^n C_n }  \mathcal{H}_n.
    \]
    This is clearly a non-atomic Radon probability measure and has full topological support by the lower bound of \eqref{eq:AR} and the density of $Z \subset [0,1]$.
    Moreover, it is (generalized) $\delta$-Frostman by the previous two displays.

    It remains to conclude that $\udimh\nu = \delta$. First, $\udimh\nu \geq \delta$ by the upper bound of \eqref{eq:AR}. Second, $\udimh\nu \leq \delta$ follows from Lemma \ref{lemma:Frostman-Lemma} and the fact that $\dimh Z = \delta$. The latter is a consequence of \eqref{eq:AR} and the countable stability of Hausdorff dimension, see \cite[Page 24]{Falconer}.
\end{proof}

    In the following example, we observe that the global Hölder regularity properties of $\mathscr{U}$ are not carried to functions in $\mathcal{F}$, in general.
    
    \begin{example}\label{ex:U-not-hölder}
        There exist a non-atomic Radon probability measure $\nu$ on $[0,1]$ with full topological support, such that $\udimh\nu = 1$, and the cumulative distribution function $U(x) := \nu([0,x])$ is not $\alpha$-Hölder continuous for any $\alpha \in (0,1]$.
    \end{example}

    \begin{proof}
        Let $\nu_s$ be the Radon measure $\nu$ constructed in Example \ref{prop:0-dim-full-support}, and let $\nu := 1/2(\nu_s + \textup{d}x)$. It follows from $2\nu \geq \textup{d}x$ and Lemma \ref{lemma:Frostman-Lemma} that $\udimh\nu = 1$. The other properties of $\nu$ in the claim are obvious. Now suppose that $U(x) := \nu([0,x])$ is $\alpha$-Hölder for some $\alpha \in (0,1]$. Then the function $x\mapsto \nu_s([0,x])=2U(x)-x$ is $\alpha$-Hölder, but this contradicts $\udimh\nu_s=0$.
    \end{proof}

    \subsection{Density properties}\label{sec:density}
    The remainder of the section is dedicated to studying (\textbf{Q3}) of Section \ref{sec:MainResult}.
    We first introduce helpful notation to simplify some of the statements. For $\alpha \in (0,1]$ we denote by $\mathcal{C}^\alpha \subset \mathcal{C}$ the space of $\alpha$-Hölder continuous functions $[0,1]\to\R$, and by $\mathcal{C}^+ \subset \mathcal{C}$ the space of Hölder continuous functions.
    
    Let us now begin with a simple lemma. 

    \begin{lemma}\label{lemma:subset}
        Let $\widetilde{\Lambda}$ be another Radon probability measure on $[0,1]$ with full topological support for which there is a constant $C \geq 1$ such that $\widetilde{\Lambda} \leq C \Lambda$. Let $(\widetilde{\mathcal{E}},\widetilde{\mathcal{F}})$ be another $p$-energy form defined by replacing $\Lambda$ with $\widetilde{\Lambda}$ in Definition \ref{def:p-EF}. Then $\widetilde{\mathcal{F}} \subset \mathcal{F}$.
    \end{lemma}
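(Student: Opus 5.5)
The plan is to use the characterizations in Proposition \ref{prop:F-properties}-(3) and (4): a function lies in $\mathcal{F}$ exactly when it is an indefinite integral against $\Lambda$ of an $L^p(\Lambda)$ density. Let $\widetilde{\textup{D}}$ be the derivation associated with $\widetilde{\Lambda}$. Since the construction of Section \ref{sec:Framework} only uses that the measure is a non-atomic probability measure of full support, Proposition \ref{prop:F-properties} applies to $\widetilde{\Lambda}$ as well. Note that $\widetilde{\Lambda}$ is non-atomic automatically, because $\widetilde{\Lambda}\leq C\Lambda$ and $\Lambda$ has no atoms. Hence every $f\in\widetilde{\mathcal{F}}$ satisfies
\[
    f(y) = f(0) + \int_0^y \widetilde{\textup{D}} f \,\textup{d}\widetilde{\Lambda}
\]
for all $y\in[0,1]$, with $\widetilde{\textup{D}}f \in L^p(\widetilde{\Lambda})$.

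Next, the hypothesis $\widetilde{\Lambda}\leq C\Lambda$ gives $\widetilde{\Lambda}\ll\Lambda$. By the Radon--Nikodym theorem we write $\textup{d}\widetilde{\Lambda} = w\,\textup{d}\Lambda$ with $0\leq w\leq C$ $\Lambda$-almost everywhere; the bound on $w$ follows by testing $\widetilde{\Lambda}\leq C\Lambda$ on the set $\{w>C\}$. Set $G := w\cdot \widetilde{\textup{D}} f$. Then $f(y) = f(0)+\int_0^y G\,\textup{d}\Lambda$ for all $y$, and $f\in\mathcal{C}$.

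It remains to check that $G\in L^p(\Lambda)$, which is the only estimate needed:
\[
    \int \abs{G}^p\,\textup{d}\Lambda = \int w^{p}\abs{\widetilde{\textup{D}}f}^p\,\textup{d}\Lambda \leq C^{p-1}\int w\abs{\widetilde{\textup{D}}f}^p\,\textup{d}\Lambda = C^{p-1}\int \abs{\widetilde{\textup{D}}f}^p\,\textup{d}\widetilde{\Lambda} <\infty,
\]
where we used $w^p\leq C^{p-1}w$. Proposition \ref{prop:F-properties}-(4) then gives $f\in\mathcal{F}$ with $\textup{D}f = G$, and as a by-product $\mathcal{E}(f)\leq C^{p-1}\widetilde{\mathcal{E}}(f)$.

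There is no real obstacle here. The only point requiring care is the measure-theoretic justification of the density bound $w\leq C$ and the observation that Proposition \ref{prop:F-properties} is legitimately applicable to $\widetilde{\Lambda}$.
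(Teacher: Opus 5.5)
Your proposal is correct and follows the paper's proof essentially step for step. You write $\textup{d}\widetilde{\Lambda} = w\,\textup{d}\Lambda$, represent $f$ via Proposition \ref{prop:F-properties}-(3) for $\widetilde{\Lambda}$, check $w\cdot\widetilde{\textup{D}}f \in L^p(\Lambda)$ using $w\leq C$, and conclude with Proposition \ref{prop:F-properties}-(4). The only differences are cosmetic: you get $w\leq C$ $\Lambda$-a.e.\ by testing on $\{w>C\}$ where the paper uses the Lebesgue differentiation theorem, and you explicitly note that $\widetilde{\Lambda}$ is non-atomic (so Proposition \ref{prop:F-properties} applies to it), a point the paper leaves implicit.
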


    \begin{proof}
        Let us denote the derivation $\widetilde{\textup{D}}f \in L^p(\widetilde{\Lambda})$ for $f \in \widetilde{\mathcal{F}}$.
        
        Let $f \in \widetilde{\mathcal{F}}$. Our objective is to show that $f \in \mathcal{F}$.
        By the absolute continuity $\widetilde{\Lambda} \ll \Lambda$, there is a non-negative $h\in L^1(\Lambda)$ satisfying $\textup{d}\widetilde{\Lambda}=h\textup{d}\Lambda$.
        It then follows from Proposition \ref{prop:F-properties}-(3) that
        \[
            f(x) = f(0) + \int_0^x \widetilde{\textup{D}}f \, \textup{d}\widetilde{\Lambda} = f(0) + \int_0^x \widetilde{\textup{D}}f \cdot h \, \textup{d}\Lambda.
        \]
        By Proposition \ref{prop:F-properties}-(4), it remains to show that $\widetilde{\textup{D}}f \cdot h \in L^p(\Lambda)$. For this, we use the hypothesis $\widetilde{\Lambda} \leq C\Lambda$. Indeed, it follows from the Lebesgue differentiation theorem that $0 \leq h(x) \leq C$ for $\widetilde{\Lambda}$-almost every $x \in [0,1]$. But then
        \begin{equation*}
             \int_0^1 |G|^p\textup{d}\Lambda=\int_0^1 |\widetilde{\textup{D}}f \cdot h|^p\textup{d}\Lambda= \int_0^1 h^{p-1}|\widetilde{\textup{D}}f|^p\textup{d}\widetilde{\Lambda}\leq C^{p-1}\int_0^1|\widetilde{\textup{D}}f|^p\textup{d}\widetilde{\Lambda}<\infty.
        \end{equation*}
    \end{proof}
    
    \begin{proposition}\label{prop:density}
        If $\Lambda=1/2(\nu_s+\textup{d}x)$ is as in Example \ref{ex:U-not-hölder} then the associated $p$-energy $(\mathcal{E},\mathcal{F})$ has the following property. The domain $\mathcal{F}$ contains all Lipschitz functions, but the subspace $\mathcal{C}^+ \cap \mathcal{F}$ is not dense in $(\mathcal{F},\norm{\, \cdot\,}_{\mathcal{F}})$.
    \end{proposition}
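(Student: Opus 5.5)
Both claims come from the decomposition $\Lambda = \tfrac12(\nu_s + \textup{d}x)$, with $\udimh\nu_s = 0$ and $\nu_s$ singular with respect to $\textup{d}x$. We treat the two parts separately.

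\textbf{Lipschitz functions lie in $\mathcal{F}$.} We apply Lemma \ref{lemma:subset} with $\widetilde{\Lambda} = \textup{d}x$. Since $\textup{d}x \leq 2\Lambda$, the lemma gives $\widetilde{\mathcal{F}} \subset \mathcal{F}$. For $\widetilde\Lambda=\textup{d}x$ the cumulative distribution function is the identity, so $\widetilde{\mathcal{F}} = W^{1,p}$. Every Lipschitz function on $[0,1]$ is in $W^{1,p}$ (bounded weak derivative), hence lies in $\mathcal{F}$.

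\textbf{Non-density: the functional.} We look for a bounded functional on $\mathcal{F}$ that vanishes on $\mathcal{C}^+\cap\mathcal{F}$ but not on all of $\mathcal{F}$. Since $\nu_s \perp \textup{d}x$, choose a Borel set $S$ with $\nu_s([0,1]\setminus S)=0$ and $|S|=0$. Define
\[
L(f) := \int_S \textup{D}f\,\textup{d}\Lambda .
\]
Hölder's inequality gives $|L(f)| \leq \Lambda(S)^{1-1/p}\,\mathcal{E}(f)^{1/p} \leq \norm{f}_{\mathcal{F}}$, so $L$ is bounded.

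$L$ is nonzero on $\mathcal{F}$. The function $f(x) := \int_0^x \mathds{1}_S\,\textup{d}\Lambda = \tfrac12\nu_s([0,x])$ lies in $\mathcal{F}$ by Proposition \ref{prop:F-properties}-(4), with $\textup{D}f = \mathds{1}_S$. Hence $L(f) = \Lambda(S) = \tfrac12 > 0$.

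\textbf{Key step: $L$ vanishes on Hölder functions.} Let $g\in\mathcal{C}^\alpha\cap\mathcal{F}$. We repeat the forward argument of Theorem \ref{thm:general-p}, localised to $S$. At $\Lambda$-a.e. point $x$ where $\textup{D}g(x)\neq 0$, the differentiation argument gives
\[
\limsup_{r\downarrow0}\frac{\Lambda(B(x,r))}{r^\alpha}<\infty .
\]
Suppose $\Lambda(S\cap\{\textup{D}g\neq0\})>0$. Since $\nu_s \le 2\Lambda$ and $\Lambda|_S = \tfrac12\nu_s|_S$, we get a set $E\subset S$ with $\nu_s(E)>0$ on which
\[
\limsup_{r\downarrow0}\frac{\nu_s(B(x,r))}{r^\alpha}\le 2\limsup_{r\downarrow0}\frac{\Lambda(B(x,r))}{r^\alpha}<\infty .
\]
So $\nu_s$ would be generalized $\alpha$-Frostman, contradicting Lemma \ref{lemma:upperHausdorff} because $\udimh\nu_s = 0 < \alpha$. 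Therefore $\textup{D}g = 0$ $\Lambda$-a.e. on $S$, and $L(g)=0$.

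\textbf{Conclusion and main obstacle.} By continuity of $L$, it vanishes on the closure of $\mathcal{C}^+\cap\mathcal{F}$, while $L(f) = \tfrac12$. So $f$ is not in that closure, and $\mathcal{C}^+\cap\mathcal{F}$ is not dense. The delicate point is the localisation in the key step. The density-point set from \cite[Corollary 2.14]{Mattila} is taken with respect to $\Lambda$, and the Frostman bound must be transferred from $\Lambda$ to $\nu_s$ on a subset of $S$. This transfer uses only the inequality $\nu_s\le 2\Lambda$ on balls, so it should go through.
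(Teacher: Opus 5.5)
Your proof is correct, but it is organised differently from the paper's.

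\textbf{The paper's route.} It argues by contradiction with an approximating sequence $f_n\in\mathcal{C}^+\cap\mathcal{F}$, $f_n\to f=\nu_s([0,\cdot\,])$. It forms the truncations $\tilde f_n(x)=\int_0^x \textup{D}f_n\,\mathds{1}_E\,\textup{d}\Lambda$ and asserts they are Hölder. It places them in the domain of the energy form built from $\nu_s$ and applies Corollary \ref{cor:general-p-delta_*} to that form to conclude they are constant. Finally it contradicts the pointwise convergence $\tilde f_n\to f$.

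\textbf{How yours differs.} Your functional $L$ is exactly $g\mapsto \tilde g(1)$ in that notation, so the underlying idea is the same: cut the derivative down to the singular set and exploit $\udimh\nu_s=0$ there. The difference is that you run the forward half of Theorem \ref{thm:general-p} for $\Lambda$ itself, localised to $S$. You then transfer the Frostman bound to $\nu_s$ via $\nu_s\le 2\Lambda$ and $\Lambda|_S=\tfrac12\nu_s|_S$, with no auxiliary form needed.

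\textbf{What this buys.} The paper justifies the Hölder continuity of $\tilde f_n$ by the inequality $|\tilde f_n(x)-\tilde f_n(y)|\le|f_n(x)-f_n(y)|$, ``by (3) and (4)''. That does not follow from those items: truncating $\textup{D}f_n$ to $E$ can destroy cancellations, and the inequality is false for general elements of $\mathcal{F}$. It holds for Hölder $f_n$ only because $\textup{D}f_n=0$ $\Lambda$-a.e. on $E$, i.e. $\tilde f_n\equiv 0$, which is precisely your key step. Once that step is in hand, the paper's detour through the $\nu_s$-form is redundant. Your version also gives a quantitative statement for free: $\|f-g\|_{\mathcal{F}}\ge |L(f)|=\tfrac12$ for every $g\in\mathcal{C}^+\cap\mathcal{F}$.

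\textbf{One unjustified assertion.} You take $\nu_s\perp\textup{d}x$ for granted. The paper derives it from $\dimh\nu_s<1$. It also follows directly from $\udimh\nu_s=0$ by Lebesgue differentiation: a nonzero absolutely continuous part would force $\ldimloc(\nu_s,x)=1$ on a set of positive $\nu_s$-measure. One line to this effect would make your argument self-contained.
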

    \begin{proof}
    The fact that Lipschitz functions lie in $\mathcal{F}$ follows from Lemma \ref{lemma:subset} by taking $\widetilde{\Lambda} = \textup{d}x$. Indeed, in this case $\widetilde{\mathcal{F}} = W^{1,p}$ which contains the Lipschitz functions.

    Next let $f(x) := \nu_s([0,x])$. Again, $f \in \mathcal{F}$ by taking $\widetilde{\Lambda} = \nu_s$ in Lemma \ref{lemma:subset}. 
    The proof would be completed once we show that $f$ cannot be approximated by Hölder continuous functions in the Sobolev norm $\norm{\, \cdot\, }_{\mathcal{F}}$.
    To this end, suppose the contrary that it can be, and let $\{f_n\}_{n\in\N} \subset \mathcal{F}$ be such a sequence. First, observe that $\nu_s$ and $\textup{d}x$ are mutually singular, meaning there exists a Borel set $E \subset [0,1]$ with zero Lebesgue measure satisfying $\nu_s(E) = 1$.
    This follows from \cite[Proposition 10.3]{Falconer}, using $\dimh\nu_s<1$, and the fact that sets with Hausdorff dimension smaller than one have zero Lebesgue measure.
    Then take the sequence $\{ \tilde{f}_n \}_{n\in\N}$ given by
    \[
        \tilde{f}_n(x) := \int_0^x \textup{D}f_n \cdot \mathds{1}_E \, \textup{d}\Lambda
    \]
    for all $x \in [0,1]$.
    Note that, by (3) and (4) of Proposition \ref{prop:F-properties} $\abs{\tilde{f}_n(x) - \tilde{f}(y)} \leq \abs{f_n(x) - f_n(y)}$ for all $x,y \in [0,1]$. In particular, each $\tilde{f}_n$ is also Hölder continuous.
    
    By the convergence $\textup{D} f_n \to \textup{D} f$ in $L^p(\Lambda)$ and the equality $\textup{D} f = \textup{D} f \mathds{1}_E$, it follows that $\textup{D} f_n \mathds{1}_E \to \textup{D} f$ in $L^p(\Lambda)$. By Proposition \ref{prop:F-properties}-(3) we also have
    \[
        f(x) = \int_0^x \textup{D}f\, \textup{d}\Lambda
    \]
    for all $x \in [0,1]$. Hence $\tilde{f}_n(x) \to f(x)$ at every $x \in [0,1]$.
    
    Let $(\widetilde{\mathcal{E}},\widetilde{\mathcal{F}})$ be as in Lemma \ref{lemma:subset} with $\widetilde{\Lambda} = \nu_s$.
    By Proposition \ref{prop:F-properties}-(4), we have that $\{ \tilde{f}_n \}_{n\in\N} \subset \widetilde{\mathcal{F}}$.
    However, as noted above, each $\tilde{f}_n$ is Hölder continuous.
    Since $\udimh \nu_s = 0$, it follows from Corollary \ref{cor:general-p-delta_*} that $\{ \tilde{f}_n \}_{n\in\N}$ is a sequence of constant functions. But this and the pointwise convergence $\tilde{f}_n \to f$ would imply that $f$ is also constant, which is certainly not the case.
    \end{proof}
    Next we observe another interesting case where Hölder functions are dense in $\mathcal{F}$ while $\alpha$-Hölder functions are not for any fixed $\alpha\in(0,1]$.
    \begin{proposition}\label{prop:alphaHölder-notdense}
        There exists a non-atomic Radon probability measure $\Lambda$ with full topological support on $[0,1]$ such that the associated $p$-energy has the following property. The subspace $\mathcal{C}^+ \cap \mathcal{F}$ is dense in $(\mathcal{F},\norm{\,\cdot\,}_{\mathcal{F}})$ but the subspace $\mathcal{C}^\alpha \cap \mathcal{F}$ is not dense in $(\mathcal{F},\norm{\,\cdot\,}_{\mathcal{F}})$ for any $\alpha\in(0,1]$.
    \end{proposition}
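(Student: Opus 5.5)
The plan is to glue together infinitely many measures from Example \ref{prop:is-gen-delta-frostman} whose dimensions decrease to $0$. Fix a sequence $\delta_n \in (0,1)$ strictly decreasing to $0$. For each $n$, let $\nu_n$ be the measure of Example \ref{prop:is-gen-delta-frostman} with exponent $\delta_n$. By construction it is supported on a dense set $Z_n$ with $\dimh Z_n = \delta_n$, and it satisfies a global Frostman bound $\nu_n(B(x,r)) \leq C_n r^{\delta_n}$ for all $x$ and $r$. Set
\[
\Lambda := c\sum_{n} 2^{-n}\nu_n,
\]
with $c$ a normalizing constant. This $\Lambda$ is a non-atomic Radon probability measure with full topological support.

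\textbf{Step 1: mutual singularity.} I would first check that the $\nu_n$ are carried by disjoint Borel sets. If $m<n$, then $\nu_m(Z_n)=0$: $\nu_m$ is $\delta_m$-Frostman, and $\mathcal{H}^{\delta_m}_\infty(Z_n)=0$ because $\delta_n<\delta_m$. If $m>n$, then $\nu_n(Z_m)=0$ by the same argument. After discarding null sets, the sets $Z_n$ become pairwise disjoint, and $\Lambda|_{Z_n} = c2^{-n}\nu_n$.

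I would also record two pointwise facts for $\Lambda$-a.e.\ $x\in Z_n$. The first is $\ldimloc(\Lambda,x)\leq \delta_n$. For $\nu_n$-a.e.\ $x$ this follows from $\Lambda\geq c2^{-n}\nu_n$ together with $\udimh\nu_n=\delta_n$; the latter comes from Lemma \ref{lemma:Frostman-Lemma} and $\dimh Z_n=\delta_n$. The second is that, for any $\alpha>\delta_n$, this gives $\limsup_{r\downarrow 0}\Lambda(B(x,r))/r^\alpha=\infty$. I expect this step, keeping the singularity and dimension bookkeeping correct, to be the main technical point.

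\textbf{Step 2: density of $\mathcal{C}^+\cap\mathcal{F}$.} Let $f\in\mathcal{F}$. For $N,M\in\N$ put $G_{N,M}:=\textup{D}f\cdot\mathds{1}_{Z_1\cup\dots\cup Z_N}\cdot\mathds{1}_{\{|\textup{D}f|\leq M\}}$, and define
\[
g_{N,M}(x):=f(0)+\int_0^x G_{N,M}\,\textup{d}\Lambda .
\]
By Proposition \ref{prop:F-properties}-(4), $g_{N,M}\in\mathcal{F}$ with $\textup{D}g_{N,M}=G_{N,M}$. Letting $N=M\to\infty$, dominated convergence gives $G_{N,M}\to \textup{D}f$ in $L^p(\Lambda)$. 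Proposition \ref{prop:F-properties}-(3) and Hölder's inequality then give uniform convergence $g_{N,M}\to f$, hence convergence in $L^p(\mu)$, so $g_{N,M}\to f$ in $\norm{\,\cdot\,}_{\mathcal{F}}$. Each $g_{N,M}$ is Hölder continuous: for $|x-y|\leq 1$,
\[
|g_{N,M}(x)-g_{N,M}(y)|\leq M\sum_{n\leq N}c2^{-n}\nu_n(B(x,2|x-y|))\leq C'|x-y|^{\delta_N}.
\]

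\textbf{Step 3: non-density of $\mathcal{C}^\alpha\cap\mathcal{F}$.} Fix $\alpha\in(0,1]$ and choose $n$ with $\delta_n<\alpha$. Let $f(x):=\int_0^x\mathds{1}_{Z_n}\,\textup{d}\Lambda\in\mathcal{F}$. Take any $\alpha$-Hölder $g\in\mathcal{F}$. The forward-implication argument in the proof of Theorem \ref{thm:general-p} shows the following: at every Lebesgue point $x$ of $\textup{D}g$ (with respect to $\Lambda$) where $\textup{D}g(x)\neq0$, one has $\limsup_{r\downarrow0}\Lambda(B(x,r))/r^\alpha\leq 2K/|\textup{D}g(x)|<\infty$. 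By Step 1, this fails $\Lambda$-a.e.\ on $Z_n$, so $\textup{D}g=0$ $\Lambda$-a.e.\ on $Z_n$. Consequently
\[
\norm{f-g}_{\mathcal{F}}\geq \norm{\textup{D}f-\textup{D}g}_{L^p(\Lambda)}\geq \Lambda(Z_n)^{1/p}>0,
\]
uniformly in $g$. Hence $f$ is not in the closure of $\mathcal{C}^\alpha\cap\mathcal{F}$.
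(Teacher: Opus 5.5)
Your proposal is correct, but it departs from the paper's proof both in the choice of $\Lambda$ and in the non-density step.

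\textbf{Construction.} The paper places Bernoulli measures $\widetilde{\nu}_n$ with parameters $\rho_n\downarrow 0$ on the disjoint intervals $[1/(n+1),1/n]$. Mutual singularity is then automatic, and the good region for density is simply $[1/n,1]$, where $\Lambda$ is $\alpha_n$-Frostman by an explicit dyadic mass bound. You superpose fully supported measures from Example \ref{prop:is-gen-delta-frostman}. This costs you a Hausdorff-content singularity argument, but it gives an example in which every subinterval carries all the dimensions $\delta_n$.

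\textbf{Density.} Your density step is the same truncation-plus-dominated-convergence argument as the paper's.

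\textbf{Non-density.} The paper argues by contradiction. It restricts $\textup{D}f_k$ of the approximants to $[1/(n+1),1/n]$; this amounts to clamping $f_k$ to that interval, so $\alpha$-Hölder continuity survives. It then places the resulting functions in the auxiliary form built from $\widetilde{\Lambda}=\frac12(\widetilde{\nu}_n+\nu_n)$ and invokes Corollary \ref{cor:general-p-delta_*} to force them to be constant. You instead localize the forward half of the proof of Theorem \ref{thm:general-p}: every $\alpha$-Hölder $g\in\mathcal{F}$ has $\textup{D}g=0$ $\Lambda$-a.e.\ on a carrier of dimension $\delta_n<\alpha$. This yields the explicit bound $\norm{f-g}_{\mathcal{F}}\geq\Lambda(Z_n)^{1/p}$. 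Your version is shorter and quantitative, and it needs no auxiliary form. It also does not require the carrier to be an interval, which is exactly what makes your overlapping construction workable.

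\textbf{A correction to Step 1.} Your case ``$m>n$'' is just the case ``$m<n$'' with the indices renamed. So you have only shown that a higher-dimensional $\nu_m$ gives no mass to a lower-dimensional $Z_n$. The converse need not hold for the raw sets, since a lower-dimensional Cantor set can sit inside a higher-dimensional one. The one-sided statement is nevertheless enough. The Borel sets $Z_n':=Z_n\setminus\bigcup_{m>n}Z_m$ are pairwise disjoint with $\nu_n(Z_n')=1$. Hence $\nu_m(Z_n')=0$ for $m\neq n$, and $\Lambda|_{Z_n'}=c2^{-n}\nu_n$. You should use these sets in Steps 2 and 3. In particular, the $\delta_N$-Hölder bound in Step 2 relies on $\Lambda|_{Z_1'\cup\dots\cup Z_N'}=c\sum_{n\leq N}2^{-n}\nu_n$, which could fail for the raw $Z_n$.
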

    \begin{proof}
        We construct the measure $\Lambda$ as follows. Pick a sequence $\rho_n\downarrow 0$ with $\rho_1=1/2$ and let $\nu_n$ denote the self-similar measure constructed as in Example \ref{prop:not-gen-delta-frostman} with the probability parameter $\rho_n$. Let $\widetilde{\nu}_n$ denote the pushforward of $\nu_n$ under the unique affine orientation preserving map sending $[0,1]$ to $[1/(n+1),1/n]$ and define
        \begin{equation*}
            \Lambda=\sum_{n=1}^{\infty}2^{-n}\widetilde{\nu}_n.
        \end{equation*}
        It is clear that $\Lambda$ satisfies the required properties, so it remains to verify the density properties of $(\mathcal{E},\mathcal{F})$. We begin with the density of Hölder continuous functions.

        Note that for any level-$k$ dyadic interval $I$, it follows from the construction that
        \begin{equation*}
            \nu_n(I)\leq (1-\rho_n)^k,
        \end{equation*}
        where $\rho_n$ is the probability parameter used in the definition of $\nu_n$. By covering open balls with suitable dyadic intervals, it follows that $\nu_n$ is $\alpha_n$-Frostman for $\alpha_n := -\log(1-\rho_n)/\log 2$. Consequently, it follows that the restriction of $\Lambda$ to $[1/n,1]$ is also $\alpha_n$-Frostman.

        Now, let $f \in \mathcal{F}$ and assume without loss of generality that $f(0)=0$. Consider the functions
        \[
            f_n(x) := \int_0^x 
            \textup{D}f \cdot
            \mathds{1}_{ [1/n,1] \cap \{ \abs{\textup{D} f} \leq n \} } \, \textup{d}\Lambda.
        \]
        By Proposition \ref{prop:F-properties}-(4), it holds that $\{f_n\}_{n\in\N} \subset \mathcal{F}$. These are Hölder continuous because
        \[
            \abs{f_n(y)-f_n(x)}\leq n \int_x^y \mathds{1}_{[1/n,1]}\, \textup{d}\Lambda = n\Lambda([x,y] \cap [1/n,1]) \leq n C_n \abs{x-y}^{\alpha_n}.
        \]
        In the last inequality, we used the fact that $\Lambda$ restricted to $[1/n,1]$ is $\alpha_n$-Frostman. To conclude the density, we need to check that $f_n \to f$ in $(\mathcal{F},\norm{\,\cdot\,}_{\mathcal{F}})$.
        
        By Proposition \ref{prop:F-properties}-(4), it holds for each $n$ that $\textup{D}f_n = 
        \textup{D}f \cdot
        \mathds{1}_{[1/n,1] \cap \{ \abs{\textup{D} f} \leq n \} }$. In particular, $\textup{D}f_n \to \textup{D}f$ pointwise $\Lambda$-almost everywhere and $\abs{\textup{D}f_n} \leq \abs{\textup{D}f}$. The dominated convergence theorem then implies $\textup{D}f_n \to \textup{D}f$ in $L^p(\Lambda)$ as $n \to \infty$. Hence $\mathcal{E}(f-f_n) = \norm{\textup{D}f - \textup{D}f_n}_{L^p(\Lambda)} \to 0$ as $n\to\infty$. By using $f(0)=0$, it follows easily from Proposition \ref{prop:F-properties}-(3) that $f_n \to f$ in $\norm{\,\cdot\,}_{\infty}$, and hence $f_n \to f$ in $L^p(\textup{d}x)$. This completes the proof of the density.

        We move on to the non-density proof. Let $\alpha \in (0,1]$, and take a large enough $n$ so that $\udimh\widetilde{\nu}_n<\alpha$. We claim that the function
        \[
            f(x) := \int_0^x \mathds{1}_{[1/(n+1),1/n]} \, \textup{d}\Lambda = \int_0^x 2^{-n}\mathds{1}_{[1/(n+1),1/n]} \, \textup{d}\tilde{\nu}_n,
        \]
        which is contained in $\mathcal{F}$ by Proposition \ref{prop:F-properties}-(4),
        cannot be approximated by $\alpha$-Hölder functions.
        To this end, we assume that it can be and let $\{ f_k \}_{k \in \N} \subset \mathcal{F}$ be such an approximating sequence. By a similar argument as in Proposition \ref{prop:density}, the functions
        \[
            \tilde{f}_k(x) := \int_0^x \mathds{1}_{[1/(n+1),1/n]}\textup{D}f_k \textup{d}\Lambda = \int_0^x 2^{-n} \mathds{1}_{[1/(n+1),1/n]}\textup{D}f_k \textup{d}\tilde{\nu}_n
        \]
        form a sequence of $\alpha$-Hölder functions that converge to $f$ point-wise. Now, take any non-atomic Radon probability measure $\widetilde{\Lambda}$ with full topological support, $\widetilde{\nu}_n \leq C\widetilde{\Lambda}$ for some $C > 0$ and $\udimh \widetilde{\Lambda} < \alpha$. For instance, we may take $\widetilde{\Lambda} = 1/2(\tilde{\nu}_n + \nu_n)$.

        Let $(\widetilde{\mathcal{E}},\widetilde{\mathcal{F}})$ be the $p$-energy form where we replace $\Lambda$ with $\widetilde{\Lambda}$ in Definition \ref{def:p-EF}.
        A similar argument as in the proof of Lemma \ref{lemma:subset} shows that $\{\tilde{f}_k\}_{k\in \N}\subset \widetilde{\mathcal{F}}$. But since they are $\alpha$-Hölder, according to Corollary \ref{cor:general-p-delta_*}, they must be constant. The point-wise convergence $\tilde{f}_k \to f$ then implies that $f$ is also constant, which yields a contradiction.
        \end{proof}

        And lastly, we discuss examples where $\alpha$-Hölder functions are dense.

        \begin{proposition}
            For every $\delta \in (0,1]$ there is a non-atomic Radon probability measure $\Lambda$ with full topological support on $[0,1]$ and $\udimh \Lambda = \delta$ such that the associated $p$-energy has the following property. For every $\alpha \in (0,\delta)$ the subspace $\mathcal{C}^\alpha \cap \mathcal{F}$ is dense in $(\mathcal{F},\norm{\,\cdot\,}_\mathcal{F})$.
        \end{proposition}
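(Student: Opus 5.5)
The plan is to choose $\Lambda$ so that $[0,1]$ is exhausted, up to a $\Lambda$-null set, by sets on which $\Lambda$ is uniformly $\alpha$-Frostman for every $\alpha<\delta$. Density then follows by truncating the derivative, exactly as in the proof of Proposition \ref{prop:alphaHölder-notdense}.

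\textbf{Choice of measure.} For $\delta=1$ take $\Lambda=\textup{d}x$. Then $\mathcal{F}=W^{1,p}$, and Lipschitz functions are dense in $W^{1,p}$, so in particular $\mathcal{C}^\alpha\cap\mathcal{F}$ is dense for every $\alpha$. For $\delta\in(0,1)$ take $\Lambda=\nu$ from Example \ref{prop:is-gen-delta-frostman}, namely
\[
\nu=c\sum_n 2^{-n}C_n^{-1}\mathcal{H}_n,\qquad \mathcal{H}_n(B(x,r))\le C_nr^\delta .
\]
The normalisation gives $\nu(B(x,r))\le c\sum_n 2^{-n}r^\delta\le c\, r^\delta$, so $\nu$ is globally $\delta$-Frostman. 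It is non-atomic, has full support, and satisfies $\udimh\nu=\delta$. The same choice works for $\delta=1$ as well. The key feature is that $\Lambda(B(x,r))\le Cr^\delta$ for \emph{all} $x$ and $r$, and hence $\Lambda(B(x,r))\le C'r^\alpha$ on $[0,1]$ for every $\alpha\le\delta$.

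\textbf{Approximation.} Let $f\in\mathcal{F}$; after subtracting a constant we may assume $f(0)=0$. Define
\[
f_n(x):=\int_0^x \textup{D}f\cdot\mathds{1}_{\{\abs{\textup{D}f}\le n\}}\,\textup{d}\Lambda .
\]
By Proposition \ref{prop:F-properties}-(4), $f_n\in\mathcal{F}$ and $\textup{D}f_n=\textup{D}f\,\mathds{1}_{\{\abs{\textup{D}f}\le n\}}$. For $x\le y$ we have
\[
\abs{f_n(y)-f_n(x)}\le n\Lambda([x,y])\le n\Lambda\bigl(B(x,2\abs{x-y})\bigr)\le nC2^\delta\abs{x-y}^\delta ,
\]
so $f_n\in\mathcal{C}^\delta\subset\mathcal{C}^\alpha$ for every $\alpha\in(0,\delta]$. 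Dominated convergence gives $\textup{D}f_n\to\textup{D}f$ in $L^p(\Lambda)$. By Proposition \ref{prop:F-properties}-(3) and Hölder's inequality,
\[
\norm{f-f_n}_\infty\le\norm{\textup{D}f-\textup{D}f_n}_{L^1(\Lambda)}\le\norm{\textup{D}f-\textup{D}f_n}_{L^p(\Lambda)}\to0 .
\]
Since $\mu$ is a probability measure, this yields convergence in $L^p(\mu)$, and therefore $f_n\to f$ in $\norm{\cdot}_\mathcal{F}$.

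\textbf{Main obstacle.} The only real point is ensuring that the Frostman bound holds globally, not merely on a set of positive measure. With only the local version (generalized Frostman), one would need the truncation to a subset $E$ where the bound is uniform, and such $E$ need not exhaust $\Lambda$-almost all of $[0,1]$. I expect checking the global bound for the measure of Example \ref{prop:is-gen-delta-frostman} to be immediate from the normalisation weights $2^{-n}C_n^{-1}$. In fact this shows more than the statement asks: density holds even at $\alpha=\delta$.
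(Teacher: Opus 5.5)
Your proof is correct, but it reaches the statement through a different choice of measure than the paper.

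The paper takes $\Lambda$ to be the self-similar Bernoulli measure of Example \ref{prop:not-gen-delta-frostman}. For each fixed $\alpha<\delta$ it invokes the law-of-large-numbers fact that there are sets $E_n$ with $\Lambda([0,1]\setminus E_n)<1/n$ on which $\Lambda(B(x,r))\le C r^\alpha$ holds uniformly. It then truncates the derivative to $E_n\cap\{\abs{\textup{D}f}\le n\}$ and obtains $\alpha$-Hölder approximants by the argument of the backward direction of Theorem \ref{thm:general-p}.

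You instead take the measure of Example \ref{prop:is-gen-delta-frostman}, which is globally $\delta$-Frostman. The bound $\nu(B(x,r))\le c\,r^\delta$ does follow at once from the weights $2^{-n}C_n^{-1}$. So the exhaustion by sets $E_n$ is unnecessary, and the truncation to $\{\abs{\textup{D}f}\le n\}$ alone already gives $\delta$-Hölder approximants. Your route is fully elementary and avoids any appeal to probabilistic fine structure. It also gives density at the endpoint $\alpha=\delta$, with one sequence serving every $\alpha$ at once.

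The paper's route buys generality of a different kind. It only uses that $\ldimloc(\Lambda,x)>\alpha$ for $\Lambda$-a.e. $x$, so it applies to exact-dimensional measures such as the Bernoulli measures. For those measures $\mathcal{C}^\delta\cap\mathcal{F}$ contains only constants, since they are not generalized $\delta$-Frostman, and density at $\alpha=\delta$ fails. Taken together, the two examples show that density of $\mathcal{C}^\alpha\cap\mathcal{F}$ for all $\alpha<\delta$ is compatible with $\delta_*$ being either a maximum or a strict supremum.
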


        \begin{proof}
            The case $\delta = 1$ follows by taking $\Lambda=\textup{d}x$, so fix $\delta \in (0,1)$ and $\alpha \in (0,\delta)$.
            We let $\Lambda$ be the self-similar measure constructed in Example \ref{prop:not-gen-delta-frostman}. It is well-known that $\Lambda$ has the following property. For every $n \in \N$ there is $E_n \subset [0,1]$ and $C > 0$ such that $\Lambda([0,1]\setminus E_n)<1/n$ and
            \[
                \Lambda(B(x,r)) \leq Cr^\alpha
            \]
            for every $x \in E_n$ and $r>0$; this easily follows from \cite[Proposition 10.4]{Falconer}.

            Now, take an arbitrary $f \in \mathcal{F}$. Our objective is to find an approximating sequence of $\alpha$-Hölder functions. Without loss of generality, assume $f(0)=0$. For every $n \in \N$ define
            \[
                f_n(x) := \int_0^x \textup{D}f \cdot \mathds{1}_{E_n} \cdot \mathds{1}_{\{ \abs{\textup{D}f} \leq n\}}.
            \]
            By Proposition \ref{prop:F-properties}-(4), $\{f_n\}_{n\in\N}\subset \mathcal{F}$.
            Moreover, using the same argument as in the proof of the backward direction in Theorem \ref{thm:general-p}, it follows that
            \[
                \abs{f_n(y) - f_n(x)} \leq n \int_x^y \mathds{1}_{E_n} \leq n C4^\alpha \abs{x-y}^\alpha
            \]
            for all $0\leq x\leq y \leq 1$.
            This shows that $\{ f_n \}_{n \in \N} $ is a sequence of $\alpha$-Hölder functions, so it remains to conclude the convergence $f_n \to f$. First, it is a consequence of $\Lambda([0,1]\setminus E_n)\to 0$ and Proposition \ref{prop:F-properties}-(4) that $\textup{D}f_n \to \textup{D}f$ pointwise $\Lambda$-almost everywhere. Since we also have $\abs{\textup{D}f_n}\leq \abs{\textup{D}f}$ for each $n$, the convergence $f_n\to f$ in $(\mathcal{F},\norm{\,\cdot\,}_\mathcal{F})$ now follows from a similar argument as in the proof of the density in Proposition \ref{prop:alphaHölder-notdense}. This completes the proof.

        \end{proof}
        
\section{Analytic properties}\label{sec:analytic}
In this section, we aim to identify the required conditions for our constructions to exhibit finer analytic properties. For instance, when $p=2$ we are interested in heat kernel estimates.
Some informative examples are also provided.
These are unrelated to the Hölder continuity results but are definitely of independent interest.

Let us note that this is the only part of the article where the choice of $\mu$ plays a role. Moreover, the constructions in Section \ref{sec:InverseProblem} fail for $p=1$. For this reason, we assume $p\in(1,\infty)$ in the remainder of the article. The parameters $\Lambda$ and $\mu$, as well as the $p$-energy form $(\mathcal{E},\mathcal{F})$, are as in Section \ref{sec:Framework}.

\subsection{Poincar\'e and capacity}

We consider two analytic inequalities controlled by a suitable function $\Psi : (0,\infty) \to (0,\infty)$, as in Definition \ref{def:Psi}.
These are widely used in the general theory of Dirichlet forms and their $p$-variants, see \cite{GrigoryanTelcs,GHL15,ResistanceConjecture} and references therein for examples and applications.
The first one is a \emph{Poincar\'e inequality}, which in the present setting is the following. 
There is a constant $C > 0$ such that, for every $f \in \mathcal{F}$, $x \in [0,1]$ and $r > 0$ we have
\begin{equation}\label{PI}
    \tag*{$\textup{PI}_p(\Psi)$}
    \inf_{c \in \R} \int_{B(x,r)} \abs{f - c}^p \,\textup{d}\mu \leq C \Psi(r) \int_{B(x,r)} \abs{\textup{D}f}^p \, \textup{d}\Lambda. 
\end{equation}
The second one is an \emph{annular capacity upper estimate}, which refers to the following condition. There is $C > 0$ such that, for every $x \in [0,1]$ and $r > 0$, there is $\varphi \in \mathcal{F}$ with the properties $\varphi = 1$ on $B(x,r)$, $\varphi = 0$ on $[0,1] \setminus B(x,2r)$, and
\begin{equation}\label{Cap}\tag*{$\textup{Cap}_p(\Psi)$}
    \mathcal{E}(\varphi) \leq C \frac{\mu(B(x,r))}{\Psi(r)}.
\end{equation}
The following volume growth condition is often crucial in applications.
We say that a Radon measure $\nu$ on $[0,1]$ is \emph{doubling} if there is a constant $D > 0$ such that, for all $x \in [0,1]$ and $r > 0$, it holds that
\[
0 < \nu(B(x,2r)) \leq D\nu(B(x,r)).
\]

\begin{remark}\label{rem:HKE}
Recall the formula for the energy measures in Remark \ref{rem:EM}.
When $p=2$, the conjunction of the above three conditions is equivalent to \emph{sub-Gaussian heat kernel estimates} with space-time scaling $\Psi$. This follows from the very recent resolution of the Resistance conjecture\footnote{Using the earlier characterization that involve the cutoff Sobolev inequality is also feasible for our examples \cite{barlow2006stability}.} \cite{ResistanceConjecture}.
For general $p\in (1,\infty)$, these conditions imply the elliptic Harnack inequality of $p$-harmonic functions \cite{YangHarnack}.
Hence, by understanding when our construction satisfies them, we can provide new examples to the literature.
The examples can be extended further by taking Cartesian products in the sense of \cite{AEBProduct26,Strichartz}.
\end{remark}

We first consider these properties within a wider framework, without fixing the function $\Psi$.

\begin{proposition}\label{prop:PI}
    For every $f \in \mathcal{F}$, $x \in [0,1]$ and $r > 0$ it holds that
    \[
        \inf_{c \in \R} \int_{B(x,r)} \abs{f - c}^p \, \textup{d}\mu \leq \Lambda(B(x,r))^{p-1}\mu(B(x,r)) \int_{B(x,r)} \,\abs{\textup{D} f}^p \, \textup{d}\Lambda.
    \]
\end{proposition}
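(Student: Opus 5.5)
Choose the constant $c := f(x)$, the value of $f$ at the center of the ball; this suffices because the left-hand side is an infimum over $c$. Since $B(x,r)$ is an interval (an open interval intersected with $[0,1]$), for every $y \in B(x,r)$ the closed interval between $x$ and $y$ is contained in $B(x,r)$. Proposition \ref{prop:F-properties}-(3) then gives
\[
    \abs{f(y) - f(x)} = \Bigl| \int_{\min\{x,y\}}^{\max\{x,y\}} \textup{D}f \, \textup{d}\Lambda \Bigr| \leq \int_{B(x,r)} \abs{\textup{D}f} \, \textup{d}\Lambda .
\]

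Next we use Hölder's inequality with exponents $p$ and $p/(p-1)$, taken with respect to $\Lambda$ on $B(x,r)$:
\[
    \int_{B(x,r)} \abs{\textup{D}f}\, \textup{d}\Lambda \leq \Lambda(B(x,r))^{\frac{p-1}{p}} \Bigl(\int_{B(x,r)} \abs{\textup{D}f}^p \, \textup{d}\Lambda\Bigr)^{\frac1p}.
\]
Raising to the $p$-th power gives the pointwise bound
\[
    \abs{f(y)-f(x)}^p \leq \Lambda(B(x,r))^{p-1}\int_{B(x,r)} \abs{\textup{D}f}^p\,\textup{d}\Lambda
\]
for every $y \in B(x,r)$. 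The right-hand side is a constant in $y$, so integrating over $y \in B(x,r)$ against $\mu$ multiplies it by $\mu(B(x,r))$, which is exactly the claimed inequality. For $p=1$ the Hölder step is vacuous, since the factor $\Lambda(B(x,r))^0=1$, so the argument covers every $p\in[1,\infty)$.

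The argument is elementary and has no real obstacle. The one point to check is the convexity used in the first step: we integrate $\textup{D}f$ only over a subinterval of $B(x,r)$, and this is justified by the fact that balls in $[0,1]$ are intervals containing their center.
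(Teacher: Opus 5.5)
Your argument is correct and is essentially the paper's proof. Both reduce the claim to the localized oscillation bound $\abs{f(y)-f(z)}^p \leq \Lambda(B(x,r))^{p-1}\int_{B(x,r)}\abs{\textup{D}f}^p\,\textup{d}\Lambda$ for $y,z\in B(x,r)$, obtained from Proposition \ref{prop:F-properties}-(3) and Hölder's inequality, and then integrate against $\mu$; the only difference is that the paper takes $c$ to be the $\mu$-average of $f$ over $B(x,r)$ and applies Jensen's inequality as in Lemma \ref{lemma:comparability}, whereas your choice $c=f(x)$ skips the Jensen step and gives the same constant.
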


\begin{proof}
    For given $f \in \mathcal{F}$, $x \in [0,1]$ and $r > 0$ we choose the constant $c \in \R$ by
    \[
        c := \frac{1}{\mu(B(x,r))} \int_{B(x,r)} f \, \textup{d}\mu.
    \]
    The claimed inequality follows easily from the same arguments as in the proof of Lemma \ref{lemma:comparability}.
\end{proof}

\begin{proposition}\label{prop:Cap}
    Suppose that $\Lambda$ is doubling.
    Then there is $C > 0$ satisfying the following. For all $x \in [0,1]$ and $r > 0$ there is $\varphi \in \mathcal{F}$ such that $\varphi = 1$ in $B(x,r)$, $\varphi = 0$ in $[0,1] \setminus B(x,2r)$ and
    \begin{equation*}
    \mathcal{E}(\varphi) \leq C \Lambda(B(x,r))^{1-p}.
    \end{equation*}
\end{proposition}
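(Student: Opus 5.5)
We build $\varphi$ explicitly through Proposition~\ref{prop:F-properties}-(4), taking its derivation to be a constant multiple of an indicator on the two "annular" intervals. Fix $x \in [0,1]$ and $r>0$, and let $A_- := [x-2r, x-r] \cap [0,1]$ and $A_+ := [x+r, x+2r]\cap[0,1]$. The function $\varphi$ should equal $1$ on $[x-r,x+r]$, vanish outside $(x-2r,x+2r)$, and be defined by integrating $\mathds{1}_{A_-}/\Lambda(A_-)$ against $\Lambda$ on the left and $-\mathds{1}_{A_+}/\Lambda(A_+)$ on the right. Concretely, for a nondegenerate $A_-$ (positive measure since $\Lambda$ has full support), set
\[
    \varphi(y) := \frac{\Lambda([x-2r,y]\cap A_-)}{\Lambda(A_-)} - \frac{\Lambda([x+r,y]\cap A_+)}{\Lambda(A_+)} \quad \text{for } y\in[0,1],
\]
where terms with empty intervals are zero. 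If $A_-$ is empty or degenerate, meaning $x - r \leq 0$, we drop the first term and set $\varphi=1$ to the left; the right side is handled symmetrically. Since $\Lambda$ is non-atomic, $\varphi$ is continuous. It has the form $\varphi(0)+\int_0^y G\,\textup{d}\Lambda$ with $G = \mathds{1}_{A_-}/\Lambda(A_-) - \mathds{1}_{A_+}/\Lambda(A_+)$ (omitting dropped terms), so Proposition~\ref{prop:F-properties}-(4) gives $\varphi\in\mathcal{F}$ and $\textup{D}\varphi=G$. The boundary values check out by direct inspection.

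The energy then computes as
\[
\mathcal{E}(\varphi)=\int |G|^p\,\textup{d}\Lambda = \Lambda(A_-)^{1-p}+\Lambda(A_+)^{1-p},
\]
and it remains to show $\Lambda(A_\pm)\geq c\,\Lambda(B(x,r))$ whenever $A_\pm$ is used. This is the only real step, and doubling is exactly what provides it. The interval $A_+$ contains the ball $B(x+3r/2, r/2)$, or at least its intersection with $[0,1]$. When the truncation at $1$ cuts this ball, we instead shrink the transition region, replacing $A_+$ by $[x+r,\min(x+2r,1)]$. This interval still has length comparable to $r$ unless $x+r$ is close to $1$.

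This truncated case is the main obstacle. If $x+r<1$ but $1-(x+r)$ is tiny, the interval $[x+r,1]$ may carry little mass. However, since $\varphi$ need only vanish on $[0,1]\setminus B(x,2r)$, and that set is empty on the right when $x+2r>1$, we may then simply drop the right transition and keep $\varphi=1$ up to $1$. So we use $A_+$ only when $x+2r\leq 1$, and in that case $A_+=[x+r,x+2r]$ is a full interval of length $r$.

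With $A_+=[x+r,x+2r]\subset[0,1]$ and $z$ its midpoint, $B(z,r/2)\subset A_+$ up to endpoints, which carry no mass. Moreover $B(x,r)\subset B(z,2r)$, so repeated doubling gives
\[
\Lambda(B(x,r))\leq\Lambda(B(z,2r))\leq D^2\Lambda(B(z,r/2))\leq D^2\Lambda(A_+).
\]
The same bound holds for $A_-$. Hence $\mathcal{E}(\varphi)\leq 2D^{2(p-1)}\Lambda(B(x,r))^{1-p}$, which is the claimed estimate with $C = 2D^{2(p-1)}$.
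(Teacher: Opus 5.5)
Your construction is the paper's own. Your $\varphi$ is exactly the $\mathscr{U}$-linear interpolation $\varphi(z)=\frac{\mathscr{U}(z)-\mathscr{U}(x-2r)}{\mathscr{U}(x-r)-\mathscr{U}(x-2r)}$ on the annular pieces, and the energy is $\Lambda(A_-)^{1-p}+\Lambda(A_+)^{1-p}$, followed by doubling. You justify membership in $\mathcal{F}$ via Proposition \ref{prop:F-properties}-(4) instead of piecewise linearity of $\varphi\circ\mathscr{U}^{-1}$, which is immaterial. The paper writes out only the interior case $(x-2r,x+2r)\subset[0,1]$. Your rule of dropping a transition when it would stick out of $[0,1]$ correctly supplies the omitted boundary cases. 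Two small corrections are needed, however.

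\textbf{The ball inclusion is wrong.} With $z=x+\tfrac32 r$, the inclusion $B(x,r)\subset B(z,2r)$ fails in general. Points of $B(x,r)$ near $x-r$ lie at distance close to $\tfrac52 r$ from $z$; for instance $x-\tfrac{9}{10}r$, whenever it lies in $[0,1]$, is at distance $\tfrac{12}{5}r$. You only have $B(x,r)\subset B(z,\tfrac52 r)\subset B(z,4r)$. Three doublings then give $\Lambda(B(x,r))\le D^3\Lambda(B(z,r/2))\le D^3\Lambda(A_+)$, so the constant becomes $C=2D^{3(p-1)}$.

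\textbf{The left-side rule must match the refined right-side rule.} Your first paragraph drops the left transition only when $x-r\le 0$. Under that rule, if $x-2r<0<x-r$ with $x-r$ tiny, then $A_-=[0,x-r]$ and $\Lambda(A_-)$ can be arbitrarily small compared with $\Lambda(B(x,r))$, so the estimate breaks. You need the mirror image of your refined right-hand rule: use $A_-$ only when $x-2r\ge 0$, and otherwise set $\varphi=1$ on $[0,x+r]$. This is allowed because $\{y\in[0,1]: y\le x-2r\}$ is then empty, and it is what your claim that ``the same bound holds for $A_-$'' actually requires. State it explicitly.
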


\begin{proof}
    Let $x \in [0,1]$ and $r > 0$. For the sake of simplicity, let us assume that $(x-2r,x+2r)\subset [0,1]$. The general case follows from a similar argument.

    Define $\varphi \in \mathcal{F}$ according to
    \[
        \varphi(z) :=
        \begin{cases}
            0 & \text{ if } z \in [0,x-2r]\\
            
            \dfrac{\mathscr{U}(z)-\mathscr{U}(x-2r)}{\mathscr{U}(x-r)-\mathscr{U}(x-2r)} & \text{ if } z\in [x-2r,x-r] \\
            
            1 & \text{ if } z \in [x-r,x+r]\\
            \dfrac{\mathscr{U}(z)-\mathscr{U}(x+2r)}{\mathscr{U}(x+r)-\mathscr{U}(x+2r)} & \text{ if } z\in [x+r,x+2r] \\
            0 & \text{ if } z \in [x+2r,1].
        \end{cases}
    \]
    Since $\varphi \circ \mathscr{U}^{-1}$ is a piecewise linear continuous function, it is contained in $W^{1,p}$. Hence $\varphi \in \mathcal{F}$ by Definition \ref{def:p-EF}. The desired estimate follows easily by computing $\mathcal{E}(\varphi)$ and applying the doubling property of $\Lambda$.
\end{proof}

\subsection{Inverse problem}\label{sec:InverseProblem}
Propositions \ref{prop:PI} and \ref{prop:Cap} show that, in order to have \ref{PI} and \ref{Cap}, $\Lambda,\mu$ and $\Psi$ should be related to each other according to
\[
\Lambda(B(x,r))^{p-1}\mu(B(x,r)) \sim \Psi(r).    
\]
Here $\sim$ means a comparability up to a constant independent of $x$ and $r$.
This is a delicate detail because the left-hand side depends on $x$ whereas the right-hand side does not, see Remark \ref{rem:Fail} for an illustrative example.

In order to understand the obstructions of \ref{PI} and \ref{Cap}, we dedicate the remainder of the section to the \emph{inverse problem} of finding $\Lambda$ and $\mu$ from a given function $\Psi$ satisfying the following condition.

\begin{definition}\label{def:Psi}
    We say that an increasing homeomorphism $\Psi : (0,\infty) \to (0,\infty)$ is a \emph{scale function} if there are $1<\beta_L\leq \beta_U$ and $C \geq 1$ such that
    \begin{equation}\label{Psi}
        C^{-1} \left( \frac{R}{r} \right)^{\beta_L} \leq \frac{\Psi(R)}{\Psi(r)} \leq C \left( \frac{R}{r} \right)^{\beta_U}.
    \end{equation}
\end{definition}

A more general question that takes also into account the volume growth rate was recently studied in \cite{MuruganLaakso,YangLaakso25}. Our work is unable to handle this because the geometry of the real line is way too simple to this end. Nevertheless, we identify the following existence result.

\begin{theorem}\label{thm:inverse}
    Let $\Psi$ be a scale function which satisfies \eqref{Psi} for $p \leq \beta_L \leq \beta_U$.
    Then there exist doubling measures $\nu_1$ and $\nu_2$ on $[0,1]$ so that \ref{PI} and \ref{Cap} hold for $\Lambda = \nu_1$ and $\mu=\nu_2$.
\end{theorem}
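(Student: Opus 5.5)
The plan is to read off from Propositions \ref{prop:PI} and \ref{prop:Cap} what is needed. Both \ref{PI} and \ref{Cap} follow once $\Lambda=\nu_1$ is doubling, $\mu=\nu_2$ is doubling, and
\[
\Lambda(B(x,r))^{p-1}\mu(B(x,r)) \sim \Psi(r)
\]
uniformly in $x\in[0,1]$ and $r\in(0,1)$; large $r$ is trivial since both measures are probability measures. Indeed, Proposition \ref{prop:PI} then gives \ref{PI} directly. For \ref{Cap}, Proposition \ref{prop:Cap} gives $\mathcal{E}(\varphi)\lesssim \Lambda(B(x,r))^{1-p}\sim \mu(B(x,r))/\Psi(r)$. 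On the line one cannot take either measure with $\nu(B(x,r))$ comparable to a function of $r$ alone unless that function is $\sim r$. So I will build the two measures together by a multiplicative cascade, in which the non-uniformity of $\mu$ exactly compensates that of $\Lambda$.

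\emph{Step 1: regularize $\Psi$ on triadic scales.} Set $\psi_k:=\Psi(3^{-k})$. I aim to replace it by $\widetilde\psi_k\sim\psi_k$ whose consecutive ratios $s_k:=\widetilde\psi_k/\widetilde\psi_{k+1}$ lie in $[3^p,S]$ for some finite $S$. Something like $\widetilde\psi_k:=\min_{j\le k}\psi_j 3^{-p(k-j)}$ should work. The bound $\psi_j/\psi_k\ge C^{-1}3^{(k-j)\beta_L}\ge C^{-1}3^{(k-j)p}$ from \eqref{Psi} and $\beta_L\ge p$ shows that $\widetilde\psi_k\sim\psi_k$ with constant $C$. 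The min-construction forces ratios at least $3^p$, and the upper bound in \eqref{Psi} bounds the ratios from above.

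\emph{Step 2: the cascade.} I will subdivide each triadic interval into three children with symmetric weights $(b_k,a_k,b_k)$, where $a_k+2b_k=1$, at generation $k$. Define $\Lambda$ by Carath\'eodory extension, and define $\mu$ on triadic intervals $I$ of generation $k$ by $\mu(I):=\widetilde\psi_k\,\Lambda(I)^{1-p}$. For $\mu$ to be additive over the three children we need
\[
a_k^{1-p}+2b_k^{1-p}=s_k .
\]
The left side, over $\{a+2b=1\}$, attains its minimum $3\cdot 3^{p-1}=3^p$ at $a=b=1/3$ and tends to $\infty$ at the boundary. Hence, since $s_k\in[3^p,S]$, a solution with $a_k,b_k\in[\varepsilon,1-\varepsilon]$ exists, where $\varepsilon$ is uniform in $k$; here $p>1$ is used. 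After normalizing $\mu$, both measures are non-atomic, have full support, and give sibling intervals comparable mass, with bounded parent/child ratios.

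\emph{Step 3: doubling and the ball estimate.} This is where I expect the main obstacle. Bounded sibling ratios do not give doubling, because adjacent triadic intervals with distant common ancestors could have very different mass. The symmetric choice $(b,a,b)$ is meant to handle this. Two adjacent generation-$m$ intervals straddling a boundary of their ancestor at generation $j$ are, from that point on, both extremal descendants. Their masses are therefore $\Lambda(J)b_{j+1}\cdots b_m$ and $\Lambda(J')b_{j+1}\cdots b_m$, with $J,J'$ adjacent at generation $j$, and the common factor cancels. An induction on $j$ should then show that adjacent intervals of the same generation have comparable $\Lambda$-mass. The same holds for $\mu$, since $\mu(I)$ depends only on $\Lambda(I)$ and the generation. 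Given this, every ball $B(x,r)$ with $3^{-k-1}\le r<3^{-k}$ contains one generation-$(k+2)$ interval and is covered by three generation-$k$ intervals that are pairwise adjacent. This gives doubling of both measures and $\Lambda(B)^{p-1}\mu(B)\sim\widetilde\psi_k\sim\Psi(r)$, which by the first paragraph completes the proof.
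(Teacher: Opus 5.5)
Your proposal is correct and follows essentially the same route as the paper. It builds symmetric triadic cascades for $\Lambda$ and $\mu$ whose weights are linked so that $\Lambda(I)^{p-1}\mu(I)$ reproduces $\Psi$ on generation-$n$ intervals (your equation $a_k^{1-p}+2b_k^{1-p}=s_k$ is exactly \eqref{eq:p-m}--\eqref{eq:q-m} with $s_k=3^{\beta}$), obtains doubling from the equal extremal weights, and concludes via Propositions \ref{prop:PI} and \ref{prop:Cap}. The only variation is the discretization of $\Psi$: your regularization $\widetilde\psi_k=\min_{j\le k}\Psi(3^{-j})3^{-p(k-j)}$, with ratios in $[3^p,S]$, replaces the two-exponent stopping-time construction of Lemma \ref{lemma:Cascade}, at the cost of needing a uniform $\varepsilon$ for $s_k$ in a compact range, which you correctly supply.
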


\begin{remark}\label{rem:Psi}
    If $\Psi$ is a scale function and $\mu$ is a doubling measure so that \ref{PI} and \ref{Cap} both hold then $\Psi$ satisfies \eqref{Psi} for $p \leq \beta_L \leq \beta_U$. This is a general phenomenon for local $p$-energies on defined metric measure spaces, and the proof of \cite[Lemma 2.3]{MuruganLaakso} can be adapted to prove this.
\end{remark}

Our first step in the proof of Theorem \ref{thm:inverse} is to express $\Psi$ as a multiplicative cascade.
Judging from the discussion related to \cite[Lemma 5.6]{MuruganLaakso}, this seems to be a standard result. For the convenience of the reader, we provide the details.

\begin{lemma}\label{lemma:Cascade}
    Let $\Psi$ be a scale function, and let $p\leq \beta_L\leq \beta_U$ be as in \eqref{Psi}. Then there is $C \geq 1$ and $\mathbf{b} : \N \to \{\beta_U,\beta_L\}$ such that, for all $n \in \N $, we have
    \[
        C^{-1}\Psi(3^{-n}) \leq \prod_{j=1}^n 3^{-\mathbf{b}(j)} \leq C\Psi(3^{-n}).
    \]
\end{lemma}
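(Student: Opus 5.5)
We construct $\mathbf{b}$ greedily, so that the logarithm of the partial product tracks $\log \Psi(3^{-n})$ with bounded error. Set $P_n := \prod_{j=1}^n 3^{-\mathbf{b}(j)}$ with $P_0 := 1$, and $q_n := \Psi(3^{-n})/\Psi(1)$. Since $C^{-1}\Psi(3^{-n})\le P_n\le C\Psi(3^{-n})$ is insensitive to the normalizing constant $\Psi(1)$ (it can be absorbed into $C$), it suffices to keep the quantity
\[
\ell_n := \log_3 P_n - \log_3 q_n
\]
bounded uniformly in $n$.

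Taking $R = 3^{-n+1}$ and $r = 3^{-n}$ in \eqref{Psi} gives the one-step bound
\[
\beta_L - c_0 \le \log_3\frac{q_{n-1}}{q_n} \le \beta_U + c_0,
\]
with $c_0 := \log_3 C$. More usefully, applying \eqref{Psi} with $R = 3^{-m}$ and $r = 3^{-n}$ for $m < n$ gives the cumulative bound
\[
(n-m)\beta_L - c_0 \le \log_3 q_m - \log_3 q_n \le (n-m)\beta_U + c_0 .
\]
The increments of $\log_3 P_n$ are exactly $-\beta_L$ or $-\beta_U$. The greedy rule is therefore: at step $n$, set $\mathbf{b}(n) := \beta_U$ if $\ell_{n-1} \ge 0$, and $\mathbf{b}(n) := \beta_L$ if $\ell_{n-1} < 0$. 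In words, if the product is currently above the target, we descend as fast as allowed; otherwise, as slowly as allowed.

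The main obstacle is proving that $\ell_n$ stays bounded. Single-step comparisons are not enough, because each step may overshoot by the constant $c_0$, and these errors could accumulate. The plan is to use the cumulative bound instead, as follows.

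Consider a maximal run of steps $m+1,\dots,n$ on which $\ell_{j-1} \ge 0$, so that every choice in the run is $\beta_U$, and assume $\ell_m$ lies in a bounded window $[0, \beta_U - \beta_L + 2c_0]$.

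Over the run, $\log_3 P$ drops by exactly $(n-m)\beta_U$. Meanwhile $\log_3 q$ drops by at most $(n-m)\beta_U + c_0$ and at least $(n-m)\beta_L - c_0$. It follows that
\[
\ell_n \ge \ell_m - c_0 \ge -c_0 .
\]
Thus $\ell$ never goes far below $0$ during a run of $\beta_U$ steps. Because the run continues only while $\ell \ge 0$, $\ell$ also cannot grow beyond its starting value plus $c_0$.

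The symmetric argument applies to runs of $\beta_L$ choices started from $\ell < 0$. There $\log_3 P$ drops by exactly $(n-m)\beta_L$, which is at most the drop of $\log_3 q$ plus $c_0$, so $\ell$ cannot exceed $c_0$ during such a run.

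The remaining care is at the moment a run ends, where $\ell$ has just crossed $0$ in one step. By the one-step bound, this crossing overshoots by at most $\beta_U - \beta_L + c_0$. Combining the run estimates with this overshoot control, an induction over the runs gives
\[
|\ell_n| \le \beta_U - \beta_L + 3c_0 \quad \text{for all } n .
\]
Hence the claim holds with $C$ replaced by $3^{\beta_U - \beta_L + 3c_0}$, multiplied by the normalization factor coming from $\Psi(1)$.

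Finally, $\mathbf{b}$ takes values only in $\{\beta_L, \beta_U\}$ by construction, so the statement follows. The hypothesis $p \le \beta_L$ plays no role in this lemma; it is only needed later, in the proof of Theorem \ref{thm:inverse}.
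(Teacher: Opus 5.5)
Your construction is the paper's. Choosing $\mathbf{b}(n)=\beta_U$ when the current product lies above the target and $\beta_L$ otherwise is exactly the rule \eqref{eq:Stopping-time}, up to your normalization by $\Psi(1)$ and the tie-breaking. Your intended analysis, going back to the last sign change of $\ell$ and applying the cumulative form of \eqref{Psi} from there to $n$, is also the paper's. But the run estimates, which carry the whole lemma, are derived in the wrong direction. Over a run of $\beta_U$-steps from $m$ to $n$,
\[
\ell_n-\ell_m=-(n-m)\beta_U+(\log_3 q_m-\log_3 q_n)\in\bigl[-(n-m)(\beta_U-\beta_L)-c_0,\;c_0\bigr],
\]
so the cumulative bound gives the \emph{upper} bound $\ell_n\le \ell_m+c_0$. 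Your displayed inequality $\ell_n\ge\ell_m-c_0\ge-c_0$ is false whenever $\beta_L<\beta_U$. For $\Psi(r)=r^{\beta_L}$, \eqref{Psi} holds with $C=1$, so $c_0=0$. Your rule then gives $\ell_0=0$, $\mathbf{b}(1)=\beta_U$ and $\ell_1=-(\beta_U-\beta_L)<\ell_0-c_0$. Symmetrically, for a $\beta_L$-run the comparison you quote yields $\ell_n\ge\ell_m-c_0$, which is a lower bound, so it does not show that $\ell$ ``cannot exceed $c_0$''. In each case, the bound on the side you need comes from the stopping rule plus the one-step overshoot at the exit. The stopping rule says $\ell\ge0$ while a $\beta_U$-run continues and $\ell<0$ while a $\beta_L$-run continues. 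The cumulative bound controls the other side. Your sentence ``because the run continues only while $\ell\ge0$, $\ell$ cannot grow beyond its starting value plus $c_0$'' has these two reasons swapped.

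With the justifications exchanged, your induction over runs closes and gives $|\ell_n|\le\beta_U-\beta_L+2c_0$. No induction is needed, though, and the paper argues directly. For the upper bound, let $i\le n$ be the last index with $\ell_i<0$. If there is none, all steps up to $n$ are $\beta_U$-steps and $\ell_n\le\ell_0+c_0=c_0$. If $i=n$ there is nothing to prove. Otherwise $\mathbf{b}(i+1)=\beta_L$ and $\mathbf{b}(j)=\beta_U$ for $i+1<j\le n$, so \eqref{Psi} applied between $3^{-i}$ and $3^{-n}$ gives
\[
\ell_n\le\ell_i-\beta_L-(n-i-1)\beta_U+(n-i)\beta_U+c_0<\beta_U-\beta_L+c_0 .
\]
Symmetrically, taking the last $i\le n$ with $\ell_i\ge0$ (which exists because $\ell_0=0$) gives $\ell_n\ge-(\beta_U-\beta_L)-c_0$.

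Two points are in your favour. First, your normalization making $\ell_0=0$ removes the paper's preliminary case distinction on whether \eqref{eq:Stopping-time} ever holds or fails. Second, your overshoot term is genuinely needed. The paper's display $\prod_{j=i+1}^n3^{-\mathbf{b}(j)}\le3^{-(n-i)\beta_U}$ overlooks that $\mathbf{b}(i+1)=\beta_L$, which costs exactly the harmless factor $3^{\beta_U-\beta_L}$ your overshoot accounts for.
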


\begin{proof}
    This proof constructs, recursively, the function $\textbf{b}$ defined on $\N$, and also an auxiliary function $\mathcal{B}$ defined on $\N \cup \{0\}$.
    We first define $\mathcal{B}(0) := 1$. Then let $i \in \N \cup \{0\}$ and assume that $\mathcal{B}(j)$ is defined for all $j \in \N \cup \{0\}$ with $j \leq i$, and that $\mathbf{b}(j)$ is defined for all $j \in \N$ with $j \leq i$. Then, if the condition
    \begin{equation}\label{eq:Stopping-time}
        \mathcal{B}(i) \leq \Psi(3^{-i})
    \end{equation}
    holds, we define $\mathbf{b}(i+1) := \beta_L$. Otherwise, $\mathbf{b}(i+1) := \beta_U$. In both cases, we define $\mathcal{B}(i+1) := \mathcal{B}(i)\cdot 3^{-\mathbf{b}(i+1)}$.

    We may assume that \eqref{eq:Stopping-time} holds for some $i \in \N$. Indeed, otherwise we use \eqref{Psi} to estimate
    \[
        3^{-i\beta_U} \leq
        C \frac{\Psi(3^{-i})}{\Psi(1)} \leq \frac{C}{\Psi(1)} \mathcal{B}(i) = \frac{C}{\Psi(1)} 3^{-i\beta_U},
    \]
    and the proof would be completed.
    By a similar argument, we may assume that \eqref{eq:Stopping-time} fails for some $i \in \N$.
    Hence, there is $N \in \N$ so that \eqref{eq:Stopping-time} holds for some $i \leq N$, and fails for some other $i \leq N$.

    We now prove the desired estimates. Let $n \in \N$ and assume without loss of generality that $n \geq N$.
    Then, if $i$ is the largest index such that $i \leq n$ and satisfies \eqref{eq:Stopping-time}, then 
    \[
        \mathcal{B}(n) = \mathcal{B}(i) \prod_{j=i+1}^n 3^{-\mathbf{b}(j)} \leq \Psi(3^{-i})\prod_{j=i+1}^n 3^{-\mathbf{b}(j)} \leq \Psi(3^{-i}) 3^{-(n-i)\beta_U} \leq C\Psi(3^{-n}).
    \]
    The last inequality follows from \eqref{Psi}.
    Similarly, if $i \leq n$ is the largest index for which \eqref{eq:Stopping-time} fails, then
    \[
        \mathcal{B}(n) = \mathcal{B}(i)\prod_{j=i+1}^n 3^{-\mathbf{b}(j)} \geq \Psi(3^{-i}) 3^{-(n-i)\beta_L} \geq C^{-1} \Psi(3^{-n}).
    \]
    This concludes the proof.
\end{proof}

In our second step, we introduce convenient probability densities.
This is the part where having $p$ strictly larger than $1$ is essential.

For $\beta \geq p$ we define $\rho_{1}(\beta),\rho_{2}(\beta),\rho_{3}(\beta)$ so that they solve the equation
\begin{equation}\label{eq:p-m}
    \begin{cases}
        \rho_1(\beta) = \rho_3(\beta) \text{ and } \rho_2(\beta) \in [1/3,1),\\
        \sum_{m=1}^3 \rho_{m}(\beta) = 1,\\
        \left(\sum_{m=1}^3 \rho_{m}(\beta)^{\frac{1}{1-p}}\right)^{1-p} = 3^{-\beta}.
    \end{cases}
\end{equation}
It follows from Hölder's inequality that $\beta \geq p$ is necessary for the well-posedness of \eqref{eq:p-m}. The sufficiency can be justified as follows. First, the values in the case $\beta = p$ are found by taking $\rho_m(p) = 1/3$ for each $m=1,2,3$. Then take $\rho_2 \in (1/3,1)$ and set $\rho_1 =  (1-\rho_2)/2 = \rho_3$. By sending $\rho_2 \to 1$, it follows that
\[
    \left(\sum_{m=1}^3 (\rho_{m})^{\frac{1}{1-p}}\right)^{1-p} \to 0.
\]
Hence, for every $\beta > p$, the values $\rho_m(\beta)$ can be found by a continuity argument.

Next, for $\beta \geq p$ we define $\rho^*_1(\beta),\rho^*_2(\beta),\rho^*_3(\beta)$ according to
\[
    \rho^*_{m}(\beta) := \left( \frac{3^{-\beta}}{\rho_m(\beta)} \right)^{\frac{1}{p-1}}.
\]
By \eqref{eq:p-m} it holds that
\begin{equation}\label{eq:q-m}
    \begin{cases}
        \rho^*_1(\beta),\rho^*_2(\beta),\rho^*_3(\beta) \in (0,1) \text{ and } \rho^*_1(\beta) = \rho^*_3(\beta),\\
        \sum_{m=1}^3 \rho^*_{m}(\beta) = 1,\\
        \rho^*_m(\beta)^{p-1} \cdot \rho_m(\beta) = 3^{-\beta} \text{ for all } m=1,2,3.
    \end{cases}
\end{equation}

\begin{proposition}\label{prop:Stopping-time}
    Let $\Psi$ be a scale function, and suppose that the constants in \eqref{Psi} satisfy $p \leq \beta_L \leq \beta_U$. Then there are doubling probability measures $\nu_1,\nu_2$ on $[0,1]$ and $C \geq 1$ such that, for all $x \in [0,1]$ and $r \in (0,1]$, we have
    \begin{equation}\label{eq:Comp}
        C^{-1}\Psi(r) \leq \nu_1(B(x,r))^{p-1} \nu_2(B(x,r)) \leq C \Psi(r).
    \end{equation}
\end{proposition}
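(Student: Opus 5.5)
We build $\nu_1,\nu_2$ as triadic multiplicative cascades driven by the sequence $\mathbf{b}$ from Lemma \ref{lemma:Cascade}. Let $\mathcal{T}_n$ be the level-$n$ triadic intervals of $[0,1]$, coded by words $m_1\cdots m_n\in\{1,2,3\}^n$ with $m=1,2,3$ meaning left, middle, right subinterval. Define
\[
    \nu_1(I_{m_1\cdots m_n}) := \prod_{j=1}^n \rho_{m_j}(\mathbf{b}(j)),\qquad \nu_2(I_{m_1\cdots m_n}) := \prod_{j=1}^n \rho^*_{m_j}(\mathbf{b}(j)).
\]
Both families are consistent because $\sum_m\rho_m=\sum_m\rho^*_m=1$ by \eqref{eq:p-m} and \eqref{eq:q-m}. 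By Carathéodory's extension theorem they define Radon probability measures. These measures are non-atomic with full support, since all weights lie in $(0,1)$ and only the two values $\beta_L,\beta_U$ occur, so the weights are bounded away from $0$ and $1$. By the third line of \eqref{eq:q-m}, for every $I\in\mathcal{T}_n$,
\[
    \nu_1(I)^{p-1}\nu_2(I) = \prod_{j=1}^n \rho^*_{m_j}(\mathbf{b}(j))^{p-1}\rho_{m_j}(\mathbf{b}(j)) = \prod_{j=1}^n 3^{-\mathbf{b}(j)},
\]
and Lemma \ref{lemma:Cascade} shows this is comparable to $\Psi(3^{-n})$. Thus \eqref{eq:Comp} holds exactly, up to constants, on triadic intervals.

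The main step is to show that $\nu_1$ and $\nu_2$ are doubling. The symmetry $\rho_1=\rho_3$ (and $\rho^*_1=\rho^*_3$) is what makes this work. Take two adjacent intervals $I,J\in\mathcal{T}_n$ and let $K\in\mathcal{T}_k$ be their smallest common ancestor. Then $I$ sits in one child of $K$ and $J$ in an adjacent child. From level $k+1$ down to level $n$, $I$ is always the rightmost child of its parent and $J$ always the leftmost, or the other way round. So, except at the single branching level, the weight factors of $I$ and $J$ are $\rho_3(\mathbf{b}(j))$ and $\rho_1(\mathbf{b}(j))$, and these are equal. At the branching level the ratio is bounded by $\max\rho/\min\rho$ over the two values of $\beta$. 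Hence $\nu_i(I)\asymp\nu_i(J)$ for adjacent triadic intervals of the same level, with a uniform constant. Since parent and child also have comparable mass, a standard argument gives doubling. Choose $n$ with $3^{-n}\le r<3^{-n+1}$. Then $B(x,r)$ contains a level-$(n+1)$ triadic interval and is covered by a bounded number of consecutive level-$n$ intervals, and all of these have comparable mass by adjacency and the parent–child comparison. (Near $0$ and $1$ the balls are cut off, but this only removes intervals.)

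The same covering gives \eqref{eq:Comp} for balls. Since $\nu_i(B(x,r))\asymp \nu_i(I)$ for the level-$n$ interval $I$ containing $x$, we get
\[
    \nu_1(B(x,r))^{p-1}\nu_2(B(x,r))\asymp\Psi(3^{-n})\asymp\Psi(r),
\]
where the last comparison uses \eqref{Psi} with bounded ratio $R/r\le 3$.

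The delicate point is the adjacency argument. A naive bound on ratios, one bounded factor per level, would blow up exponentially in $n-k$. The argument relies essentially on the left/right symmetry built into \eqref{eq:p-m}, and this is how I would handle it first.
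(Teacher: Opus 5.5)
This is essentially the paper's proof: the same triadic cascades driven by $\mathbf{b}$, the same identity $\nu_1(I)^{p-1}\nu_2(I)=\prod_{j=1}^n 3^{-\mathbf{b}(j)}$ on level-$n$ triadic intervals, doubling from the comparability of adjacent same-level intervals via $\rho_1=\rho_3$ (your common-ancestor argument is the paper's induction unrolled), and a covering argument to pass to balls. One slip to fix: your definition puts the weights $\rho_m$ on $\nu_1$ and $\rho^*_m$ on $\nu_2$, which is the reverse of what your displayed computation uses, and with the assignment as written you get $\nu_1(I)^{p-1}\nu_2(I)=\prod_{j}\rho_{m_j}(\mathbf{b}(j))^{p-1}\rho^*_{m_j}(\mathbf{b}(j))$, which in general is not $\prod_j 3^{-\mathbf{b}(j)}$ when $p\neq 2$; build $\nu_1$ from $\rho^*_m$ and $\nu_2$ from $\rho_m$, as the paper does.
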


\begin{proof}
    Let $\mathbf{b}$ be as in Lemma \ref{lemma:Cascade} and $\rho_m(\beta), \rho^*_m(\beta)$ as discussed above.
    We construct the measures $\nu_1$ and $\nu_2$ similarly as in Example \ref{prop:0-dim-full-support} as inhomogeneous Bernoulli measures, but instead of using the dyadic partition in the definition, we use the triadic one.
    
    To be slightly more precise, we construct $\nu_1$ by dividing mass to the first level triadic subintervals by assigning masses $\rho^*_1(\mathbf{b}(1))$, $\rho^*_2(\mathbf{b}(1))$ and $\rho^*_3(\mathbf{b}(1))$ to the left, middle and right intervals, respectively. We iterate this construction by distributing mass to level $n+1$ triadic intervals according to the weights $\rho^*_i(\mathbf{b}(n+1))$, that is, the left, middle and right descendants of a level $n$ triadic interval, get a proportion of $\rho^*_1(\mathbf{b}(n+1))$, $\rho^*_2(\mathbf{b}(n+1))$, and $\rho^*_3(\mathbf{b}(n+1))$ of the mass of the parent interval, respectively. This extends to a Radon probability measure on $[0,1]$ by Carathedory's extension theorem. The measure $\nu_2$ is constructed similarly but by using $\rho_i(\mathbf{b}(n))$ instead  $\rho^*_i(\mathbf{b}(n))$. The supports of $\nu_1$ and $\nu_2$ is the full unit interval because each triadic interval is assigned a positive measure. Finally, they are non-atomic by a similar argument as in Example \ref{prop:0-dim-full-support}.

   It remains to show that $\nu_1$ and $\nu_2$ are doubling measures and to prove \eqref{eq:Comp}.
   In the former, the arguments for $\nu_1$ and $\nu_2$ are identical so we only consider $\nu_1$.
   The fact that $\nu_1$ is doubling is a consequence of $\rho^*_1(\mathbf{b}(n))=\rho^*_3(\mathbf{b}(n))$ for all $n\in\N$. Let us go through the precise argument more carefully.
   
   Our first step is to prove that adjacent triadic intervals of same side length have comparable mass.
   To this end, take two constants $C,c > 0$ with the property
    \begin{equation*}
        c\leq \rho^*_i(\mathbf{b}(n))\leq C
    \end{equation*}
    for all $i=1,2,3$ and $n\in\N$.
    By the definition of $\rho^*_i(\mathbf{b}(n))$, we can choose $c$ and $C$ to depend only on $\beta_L,\beta_U$ and $p$.
    We prove by induction on $n \in \N \cup \{0\}$ that
    \[
    \nu(I) \leq (C/c)\nu(J)
    \]
    when $I$ and $J$ be adjacent $n$ level triadic intervals. The case $n=0$ is trivial because we would have $I=[0,1] =J$. Suppose that the claim holds for $n$, and take two adjacent level $n+1$ triadic intervals $I$ and $J$.
    Let $I_0$ and $J_0$ be the level $n$ that contain $I$ and $J$, respectively.
    If $I_0 = J_0$ then the inequality holds by construction. If $I_0 \neq J_0$, then $I_0$ and $J_0$ are adjacent. Moreover, neither $I$ or $J$ are the middle descendant of $I_0$ and $J_0$, respectively, because otherwise $I$ and $J$ cannot be adjacent. But then, according to the hypothesis $\rho_1^*(\mathbf{b}(n+1)) = \rho_3^*(\mathbf{b}(n+1))$, it holds that
    \[
        \nu_1(I) = \nu(I_0)\rho_1^*(\mathbf{b}(n+1)) \overset{\textup{(IH)}}{\leq} (C/c)\nu(J_0)\rho_1^*(\mathbf{b}(n+1)) = (C/c)\nu(J).
    \]
    Here $\textup{(IH)}$ stands for the use of induction hypothesis. This completes the first step.
    
    Next, take any open ball $B(x,r)$ for $x \in [0,1]$ and $r>0$, and let $L>0$ be the distance between its endpoints.
    Also let $n \in \N \cup \{0\}$ such that $3^{-n-1} < L \leq 3^{-n}$. Then $B(x,r)$ contains a level $n+2$ triadic interval $I$, and $B(x,2r)$ can be covered by 60 level $n+2$ triadic intervals so that their union is an interval.
    Using the estimate obtained in the previous step, it now follows that
    \[
        \nu_1(B(x,2r)) \leq 60 \left( \frac{C}{c} \right)^{60} \nu_1(I) \leq 60 \left( \frac{C}{c} \right)^{60}\nu(B(x,r)).
    \]
    This completes the proof of the doubling property of $\nu_1$. The doubling property of $\nu_2$ follows from an identical argument.

    We proceed to verifying \eqref{eq:Comp}. By the last row of \eqref{eq:q-m}, it holds for any triadic interval $I$ of level $n$ that
   \begin{equation*}
       \nu_1(I)^{p-1}\nu_2(I) = \prod_{j = 1}^n 3^{-\mathbf{b}(j)}.
   \end{equation*}
   By Lemma \ref{lemma:Cascade}, the right-hand side is comparable to $\Psi(3^{-n})$. By combining the previous equality with the doubling properties of $\nu_1,\nu_2$ and \eqref{Psi}, we obtain the desired conclusion.
\end{proof}

The following remark shows that the condition \eqref{eq:Comp} is somewhat delicate. We shall streamline the discussion by omitting some inessential multiplicative constants when writing $\sim$ and $\lesssim$.

\begin{remark}\label{rem:Fail}
    In general, if $\nu_1$ is only a doubling measure, then there might not exist a doubling measure $\nu_2$ and a scale function $\Psi$ such that \eqref{eq:Comp} holds.
    A simple counterexample is given by the measure $\textup{d}\nu_1 = x\, \textup{d}x$ and $p = 2$.
    It is an easy exercise to check that $\nu_1$ is doubling.
    We assume that $\nu_2$ and $\Psi$ exist and derive a contradiction.
    Let $N \in \N$ be an even number. It holds by \eqref{eq:Comp} that
    \[
        \nu_2([0,1/2]) = \sum_{k = 0}^{N/2-1} \nu_2\left( \left[ \frac{k}{N},\frac{k+1}{N} \right] \right) \sim \Psi(N^{-1}/2)\sum_{k=0}^{N/2-1} \nu_1\left( \left[ \frac{k}{N},\frac{k+1}{N} \right] \right)^{-1}.
    \]
    Also it holds that
    \[
        \nu_2([1/2,1])\sim \Psi(N^{-1}/2)\sum_{k=N/2}^{N-1} \nu_1\left( \left[ \frac{k}{N},\frac{k+1}{N} \right] \right)^{-1}.
    \]
    Moreover, an integration gives $\nu_1([k/N,(k+1)/N]) = (2k+1)/N$.
    These properties combined yield for all large enough $N$ that
    \[
        \Psi(N^{-1}/2)N^2 \log N \lesssim 1 \lesssim \Psi(N^{-1}/2)N^2. 
    \]
    These inequalities obviously fail when $N$ is large enough. Note that the above argument is valid whenever $\nu_1$ and $\nu_2$ are non-atomic Radon measure on $[0,1]$ with full topological support and $\Psi(r) > 0$ for all $r > 0$.
\end{remark}

\begin{proof}[Proof of Theorem \ref{thm:inverse}]
    A combination of Propositions \ref{prop:PI}, \ref{prop:Cap} and \ref{prop:Stopping-time} proves the claim.
\end{proof}

\bibliographystyle{acm}
\bibliography{DF}

\end{document}